
\documentclass[letterpaper, 10 pt, conference]{ieeeconf}  

\IEEEoverridecommandlockouts                              
\overrideIEEEmargins

\bibliographystyle{IEEEtran}


\usepackage{graphicx}
\usepackage{amsmath}
\usepackage{subfig}
\usepackage{psfrag}
\usepackage{amssymb}
\usepackage{latexsym}
\usepackage{subfig}
\usepackage{algorithm}
\usepackage{algorithmic}
\usepackage{color}
\usepackage{enumerate}
\usepackage{cite}
 \usepackage{verbatim}

\newtheorem{remark}{\bf Remark}[section]
\newtheorem{proposition}{\bf Proposition}[section]

\newtheorem{definition}{\bf Definition}[section]
\newtheorem{example}{\bf Example}[section]
\newtheorem{theorem}{\bf Theorem}[section]
\newtheorem{problem}{\bf Problem}[section]


\renewcommand\vec[1]{\ensuremath\boldsymbol{#1}}
\newcommand{\ssim}{\ensuremath{\slash_{\sim}}}
\newcommand{\cS}{\mathcal{S}}
\newcommand{\cP}{\mathcal{P}}

\newcommand{\interior}{\mathrm{int}}
\newcommand{\bx}{\textbf{x}}
\newcommand{\bo}{\textbf{o}}
\newcommand{\bq}{\textbf{q}}
\newcommand{\bp}{\textbf{p}}

\newcommand{\pre}{\mathrm{Pre}}
\newcommand{\eq}{\mathrm{eq}}
\newcommand{\mx}{\mathcal{X}}
\newcommand{\mr}{\mathcal{R}}
\newcommand{\mt}{\mathcal{D}}
\newcommand{\pset}{\tilde\cP}
\newcommand{\tcp}{\tilde\cP}

\newcommand{\Qsat}{{\mathcal{X}^S}}
\newcommand{\FSA}{\mathcal{A}}
\newcommand{\Prod}{\mathcal{PA}}


\newcommand{\setI}{\Sigma}

\newcommand{\Act}{{\ensuremath{\Sigma}}}
\newcommand{\toActG}{{\stackrel{\sigma}{\to}}}
\newcommand{\toAct}{{\stackrel{\sigma}{\to}_{e}}}
\newcommand{\toActQ}{{\stackrel{\sigma}{\to}}}
\newcommand{\bsigma}{\ensuremath{\vec{\sigma}}}
\newcommand{\TS}{\mathcal{T}}

\newcommand{\LTLand}{\wedge}
\newcommand{\LTLor}{\vee}
\newcommand{\LTLNext}{\mathsf{X}\, }

\newcommand{\LTLEvent}{\mathsf{F} \,}
\newcommand{\LTLUntil}{\, \mathsf{U} \,}

\newcommand{\Rset}{\mathbb{R}}
\newcommand{\Zset}{\mathbb {Z}}
\newcommand{\rn}{\mathbb{R}^{n}}

\newcommand{\ie}{{\it i.e., }}
\newcommand{\eg}{{\it e.g., }}

\newcommand{\be}{\begin{equation}}
\newcommand{\ee}{\end{equation}}

\newcommand{\st}{\,|\,}


\newcommand{\computepre}{\mathtt{ComputePre}}
\newcommand*{\myscale}{0.33}
\newcommand*{\myscaleex}{0.30}

\title{\LARGE \bf
Finite Bisimulations for Switched Linear Systems
}

\author{Ebru Aydin Gol, Xuchu Ding, Mircea Lazar and Calin Belta
\thanks{This work was partially supported by the NSF under grants CNS-0834260 and CNS-1035588 and by the ONR under grant MURI N00014-09-1051 at Boston University, and by Veni grant 10230 at Eindhoven University of Technology.}
\thanks{Ebru Aydin Gol (ebru@bu.edu) and Calin Belta (cbelta@bu.edu) are with Boston University. Xuchu Ding (dingx@utrc.utc.com) is with United Technologies Research Center. Mircea Lazar (m.lazar@tue.nl) is with Eindhoven University of Technology.}
}
\usepackage{hyperref}

\begin{document}

\maketitle
\thispagestyle{empty}
\pagestyle{empty}

\begin{abstract}
In this paper, we consider the problem of constructing a finite bisimulation quotient for a discrete-time switched linear system in a bounded subset of its state space. Given a set of observations over polytopic subsets of the state space and a switched linear system with stable subsystems, the proposed algorithm generates the bisimulation quotient in a finite number of steps with the aid of sublevel sets of a polyhedral Lyapunov function. Starting from a sublevel set that includes the origin in its interior, the proposed algorithm iteratively constructs the bisimulation quotient for any larger sublevel set. The bisimulation quotient can then be further used for synthesis of the switching law and system verification with respect to specifications given as syntactically co-safe Linear Temporal Logic formulas over the observed polytopic subsets.
\end{abstract}

\section{INTRODUCTION}\label{sec:intro}

In recent years, there has been a trend to bridge the gap between control theory and formal methods. Control theory allows for analysis and control of ``complex'' dynamical systems with infinite state spaces, such as systems of controlled differential equations,
against ``simple'' specifications, such as stability and reachability. In formal methods, ``simple'' systems, such as finite transition systems, are checked against ``complex'' (rich and expressive) specification languages, such as temporal logics. Recent studies show that certain classes of dynamical systems can be abstracted to finite transition systems. Applications in robotics \cite{belta2007symbolic}, multi-agent control systems \cite{Loizou04}, and bioinformatics \cite{batt2005validation} show that model checking and automata games can be used to analyze and control systems with non-trivial dynamics from specifications given as temporal logic formulas.

In this paper, we focus on switched linear systems made of stable subsystems, and show that a finite bisimulation abstraction of the system can be efficiently constructed within some relevant, bounded subset of the state space. Since the bisimulation quotient preserves all properties that are expressible in frameworks as rich as $\mu$-calculus, and implicitly Computation Tree Logic (CTL)  and Linear Temporal Logic (LTL) (see \eg \cite{Milner89,browne1988characterizing,davoren2000logics}), it can be readily used for system verification and controller synthesis against such specifications. We show how our method can be used for both controller synthesis and verification from specifications given as arbitrary formulas of a fragment of LTL, called syntactically co-safe LTL (scLTL). For controller synthesis, we find the largest set of initial states and switching sequences such that all system trajectories satisfy a given formula. For verification, we find the largest set of initial states such that all system trajectories satisfy the formula under arbitrary switching.

The concept of constructing a finite quotient of an infinite system has been widely studied, \eg  \cite{Tabuada06_LTL, Krogh:2001,alur1994theory}.  It is known that finite state bisimulation quotients exist only for specific classes of systems (\eg timed automata \cite{alur1994theory} and controllable linear systems \cite{Tabuada06_LTL}), and the well known bisimulation algorithm \cite{Milner89}
in general does not terminate \cite{YoBe-TAC-2010}.  Approximately bisimilar finite abstractions for continuous-time switched systems were constructed under incremental stability assumptions  in \cite{Girard:2010tac}.  For piecewise linear systems, guided refinement procedures were employed with the goal of
constructing the quotient system for verification of certain properties \cite{YoBe-TAC-2010, Krogh:2001}.

We propose to obtain a finite bisimulation quotient of the system in a computationally feasible manner by only considering the system behavior within a relevant state space that does not contain the origin,  \ie in between two positively invariant compact sets that contain the origin. Our approach relies upon the existence of a \emph{polyhedral} Lyapunov function, which is a necessary condition for stability under arbitrary switching, see, e.g., \cite{lazar2010infinity}. We propose to partition the state space by using sublevel sets of the Lyapunov function.  Such sublevel sets, which are polytopic, allow us to generate the bisimulation quotient incrementally as the abstraction algorithm iterates, with no ``holes'' in the covered state space.  Since we can obtain polytopic sublevel sets of any size from the Lyapunov function, the balance between the size of the abstracted state space and the amount of computation can be easily adjusted and controlled. Starting from the observation that
the existence of the Lyapunov function renders the origin asymptotically stable for the switched system, its trajectories can only spend a finite time in the region of interest. As a result, we restrict our attention to LTL specifications that can be satisfied in finite time, such as scLTL formulas.

This paper is a natural, but non-trivial extension of our recent work \cite{ADHS2012_LinearSysBisim}, in which we used polytopic sublevel sets to generate a bisimulation quotient for a discrete autonomous linear system.  Another conceptually related work is \cite{sloth2011abstraction}, where $n$ Lyapunov functions were used for the abstraction of $n$-dimensional continuous-time Morse-Smale systems (\eg hyperbolic linear systems) to timed automata.  The abstraction proposed therein is weaker than bisimulation, but it can be used to verify safety properties.
While both \cite{sloth2011abstraction} and this work use sublevel sets for abstraction, the main difference between \cite{sloth2011abstraction} and this approach comes from the usage of \emph{polyhedral Lyapunov functions}, and therefore different classes of systems for which the methods apply. Our approach removes the need for multiple orthogonal Lyapunov functions, and we argue that it allows for a more tractable implementation since the abstraction of timed automata is expensive by itself \cite{alur1994theory},  and polytopic sublevel sets ensure that the abstraction algorithm requires only polytopic operations.

The rest of the paper is organized as follows.  We introduce preliminaries in Sec.~\ref{sec:prelim} and formulate the problem in Sec.~\ref{sec:problem}.  We present the algorithm to generate the bisimulation quotient in Sec.~\ref{sec:genBis}, and we show in Sec.~\ref{sec:verification} how the resulting bisimulation quotient can be used to synthesize switching control laws and verify the system behavior against temporal logic formulas. Conclusions are summarized  in Sec.~\ref{sec:concl}.

\section{PRELIMINARIES}\label{sec:prelim}

For a set $\cS$, $\interior(\cS)$, $|\cS|$, and $2^\cS$ stand for its interior, cardinality, and power set, respectively. For $\lambda\in\Rset$ and $\cS\subseteq\Rset^n$, let $\lambda \cS:=\{\lambda x\mid x\in \cS\}$. We use $\Rset,\:\Rset_+,\:\Zset,$ and $\Zset_+$ to denote the sets of real numbers, non-negative reals, integer numbers, and non-negative integers. For $m,n\in \Zset_+$, we use $\Rset^n$ and $\Rset^{m\times n}$ to denote the set of column vectors and matrices with $n$ and $m\times n$ real entries.
For a vector $v$ or a matrix $A$, we denote $v^\top$ or $A^{\top}$ as its transpose, respectively.

For a vector $x\in\Rset^n$, $[x]_i$ denotes the $i$-th element of $x$ and $\|x\|_\infty=\max_{i=1,\ldots,n}\left|[x]_i\right|$ denotes the infinity norm of $x$, where $|\cdot|$ denotes the absolute value.
For a matrix $Z\in\Rset^{l\times n}$, let $\|Z\|_{\infty}:=\sup_{x\neq0}\frac{\|Zx\|_{\infty}}{\|x\|_{\infty}}$ denote its induced matrix infinity norm.

A $n$-dimensional \emph{polytope} $\cP$ (see, \eg \cite{grunbaum2003convex}) in $\rn$ can be described as the convex hull of $n+1$ affinely independent points in $\rn$.  Alternatively, $\cP$ can be described as the intersection of $k$, where $k\geq n+1$, closed half spaces, \ie there exists $k\geq n+1$ and $H_{\cP}\in \Rset^{k\times n}$, $h_{\cP}\in \Rset^{k}$, such that
\begin{equation}
\label{eq:definitionofpolytope}
\cP=\{x\in \rn \mid H_{\cP}x\leq h_{\cP}\}.
\end{equation}

We assume polytopes in $\rn$ are $n$-dimensional unless noted otherwise.
The set of boundaries of a polytope $\cP$ are called \emph{facets}, denoted by $f(\cP)$, which are themselves $(n-1)$-dimensional polytopes.  A \emph{semi-linear} set (also called a \emph{polyhedron} in literature) in $\rn$ is defined as finite unions, intersections and complements of sets
$\{x\in \rn \mid a^{\top}x\sim b, \sim \in \{=, <\}\}$, for some $a\in \rn$ and $b\in \Rset$.  Note that a convex and bounded semi-linear set is equivalent to a polytope with some of its facets removed.

\subsection{Transition systems and bisimulations}
\label{sec:tsandbisim}

\begin{definition}
\label{def:tran_sys}
A transition system (TS) is a tuple $\TS=(Q,\Act, \to,\Pi,h)$, where
\begin{itemize}
\item $Q$ is a (possibly infinite) set of states;
\item $\Act$ is a set of inputs;
\item $\to \subseteq Q\times \Act \times Q$ is a set of transitions;
\item $\Pi$ is a finite set of observations; and
\item $h : Q \longrightarrow 2^{\Pi}$ is an observation map.
\end{itemize}
We denote $x\toActG x'$ if $(x,\sigma,x')\in\to$.   We assume $\TS$ to be non-blocking, \ie for each $x\in Q$, there exists $x'\in Q$ and $\sigma \in \Act$ such that $x \toActG x'$. An {\em input word} is defined as an infinite sequence $\bsigma=\sigma _0\sigma_1\ldots$ where $\sigma_k \in \Sigma$ for all $k\in \Zset_{+}$. A {\em trajectory} of $\TS$ produced by an input word $\bsigma=\sigma _0\sigma_1\ldots$ and originating at state $x_{0}$ is an infinite sequence $\bx=x_{0}x_{1}...$ where $x_{k}\stackrel{\sigma_k}{\to} x_{k+1}$ for all $k\in \Zset_{+}$. A trajectory $\bx$ generates a word $\bo=o_{0}o_{1}...$, where $o_{k}=h(x_{k})$ for all $k\in \Zset_{+}$.
\end{definition}

The TS $\TS$ is \emph{finite} if $|Q|<\infty$ and $|\Sigma|<\infty$, otherwise $\TS$ is \emph{infinite}.  Moreover, $\TS$ is \emph{deterministic} if $x \toActG x'$ implies that there does not exist $x''\neq x'$ such that $x \toActG x''$; otherwise, $\TS$ is called \emph{non-deterministic}. Given a set $X\subseteq Q$, we define the set of states $\pre_\TS (X, \sigma)$ that reach $X$ in one step when input $\sigma$ is applied as
\begin{eqnarray}
\label{eq:pre}
&\pre_\TS (X, \sigma):=\{x\in Q \mid \exists x'\in X, x \toActG x'\}.\label{eq:presigma}
\end{eqnarray}

States of a TS can be related by a relation $\sim\subseteq Q\times Q$.  For convenience of notation, we denote $x\sim x'$ if $(x,x')\in \sim$.   The subset $X\subseteq Q$ is called an \emph{equivalence class} if $x,x'\in X \Leftrightarrow x\sim x'$.  We denote by $Q/_{\sim}$ the set labeling all equivalence classes and define a map $\eq:Q/_{\sim}\mapsto 2^Q$ such that $\eq(X)$ is the set of states in the equivalence class $X\in Q\ssim$.
\begin{definition}
\label{def:opr}
We say that a relation $\sim$ is \emph{observation preserving} if for any $x,x'\in Q$, $x\sim x'$ implies that $h(x)=h(x')$.
\end{definition}

\begin{definition}
A finite \emph{partition} $P$ of a set $\cS$ is a finite collection of sets $P:=\{P_{i}\}_{i\in I}$, such that $\cup_{i\in I}P_{i}=\cS$ and $P_{i}\cap P_{j}=\emptyset$ if $i\neq j$.  A finite \emph{refinement} of $P$ is a finite partition $P'$ of $\cS$ such that for each $P_{i}\in P'$, there exists $P_{j}\in P$ such that $P_{i}\subseteq P_{j}$. 
\end{definition}

A partition naturally induces a relation, and an observation preserving relation induces a quotient TS.
One can immediately verify that a refinement of an observation preserving partition is also observation preserving.
\begin{definition}
An observation preserving relation $\sim$ of a TS $\TS=(Q,\Act,\to,\Pi,h)$ induces a \emph{quotient transition system} $\TS\ssim=(Q\ssim, \Act, \to_{\sim},\Pi,h_{\sim})$, where $Q/_{\sim}$ is the set labeling all equivalence classes. 
The transitions of $\TS\ssim$ are defined as $X \toActG_{\sim}Y$ if and only if there exists $x\in \eq(X)$ and $x'\in \eq(Y)$ such that $x \toActG x'$. The observation map is defined as $h_{\sim}(X):=h(x)$, where $x\in \eq(X)$.
\end{definition}

\begin{definition}\label{def:bisimulation}
 Given a TS $\TS=(Q,\Act, \to,\Pi,h)$, a relation $\sim$ is a bisimulation relation of $\TS$ if (1) $\sim$ is observation preserving; and (2) for any $x_{1},x_{2}\in Q, \sigma \in \Sigma$, if $x_{1}\sim x_{2}$ and $x_{1}\toActG x_{1}'$, then there exists $x_{2}'\in Q$ such that $x_{2}\toActG x_{2}'$ and $x_{1}'\sim x_{2}'$.
\end{definition}

If $\sim$ is a bisimulation, then the quotient transition system $\TS\ssim$ is called a \emph{bisimulation quotient} of $\TS$.  In this case, $\TS$ and $\TS\ssim$ are said to be \emph{bisimilar}.  Bisimulation is a very strong equivalence relation between systems. In fact, it preserves properties expressed in temporal logics such as LTL, CTL and $\mu$-calculus  \cite{Milner89,browne1988characterizing,davoren2000logics}.  As such, it is used as an important tool to reduce the complexity of system verification or controller synthesis, since the bisimulation quotient (which may be finite) can be verified or used for controller synthesis instead of the original system.

\subsection{Polyhedral Lyapunov functions}
\label{sec:sec:lyapunovfun}

Consider an autonomous discrete-time system,
\begin{equation}
\label{eq:dynDisSys}
x_{k+1}=\Phi(x_{k}),\quad k\in \Zset_{+},
\end{equation}
where $x_{k}\in \rn$ is the state at the discrete-time instant $k$ and $\Phi: \rn\mapsto \rn$ is an arbitrary map with $\Phi(0)=0$.  Given a state $x\in\rn$, $x':=\Phi(x)$ is called a \emph{successor} state of $x$.

\begin{definition}
\label{defPI}
Let $\lambda\in[0,1]$. We call a set $\cP\subseteq\Rset^n$ \emph{$\lambda$-contractive} (shortly, contractive) if for all $x\in\cP$ it holds that $\Phi(x)\in\lambda\cP$. For $\lambda=1$, we call $\cP$ a \emph{positively invariant} set.
\end{definition}

\begin{theorem}
\label{def:lf}
Let $\mx$ be a positively invariant set for \eqref{eq:dynDisSys} with $0\in \interior(\mx)$.  Furthermore, let $\alpha_{1},\alpha_{2}\in \mathcal{K}_{\infty}$, $\rho\in(0,1)$ and $V:\rn\mapsto \Rset_{+}$ such that:
\begin{align}
\alpha_{1}(\|x\|)\leq V(x)&\leq\alpha_{2}(\|x\|), \forall x\in \mx, \label{eq:LFcond1} \\
V(\Phi(x))&\leq\rho V(x), \forall x\in \mx. \label{eq:LFcond2}
\end{align}
Then system \eqref{eq:dynDisSys} is asymptotically stable in $\mx$.
\end{theorem}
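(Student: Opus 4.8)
The plan is to run the classical discrete‑time Lyapunov argument: iterate the decrease condition along trajectories, convert it into a $\mathcal{KL}$‑type estimate on the norm of the state via the sandwich bound, and then read off attractivity and Lyapunov stability separately. First I would use that $\mx$ is positively invariant (Definition~\ref{defPI} with $\lambda=1$): for any $x_0\in\mx$ the trajectory $x_{k+1}=\Phi(x_k)$ of \eqref{eq:dynDisSys} satisfies $x_k\in\mx$ for all $k\in\Zset_+$, so \eqref{eq:LFcond2} applies at every point of the trajectory. Iterating it gives $V(x_k)\le\rho^k V(x_0)$ for all $k\in\Zset_+$.

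Next I would combine this with the sandwich bound \eqref{eq:LFcond1}: since $x_k,x_0\in\mx$,
\[
\alpha_1(\|x_k\|)\le V(x_k)\le \rho^k V(x_0)\le \rho^k\,\alpha_2(\|x_0\|).
\]
Because $\alpha_1\in\mathcal{K}_\infty$, it is continuous, strictly increasing and unbounded, hence a bijection of $\Rset_+$ with $\alpha_1^{-1}\in\mathcal{K}_\infty$; applying $\alpha_1^{-1}$ yields $\|x_k\|\le \alpha_1^{-1}\!\big(\rho^k\,\alpha_2(\|x_0\|)\big)$ for all $k$. Attractivity then follows immediately: for fixed $x_0\in\mx$ we have $\rho^k\alpha_2(\|x_0\|)\to 0$ as $k\to\infty$ since $\rho\in(0,1)$, and continuity of $\alpha_1^{-1}$ together with $\alpha_1^{-1}(0)=0$ gives $\|x_k\|\to 0$.

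For Lyapunov stability I would use $0\in\interior(\mx)$ to fix $r>0$ with the closed ball of radius $r$ contained in $\mx$. Given $\varepsilon\in(0,r]$, note $\alpha_1^{-1}\circ\alpha_2\in\mathcal{K}_\infty$, so there is $\delta\in(0,r]$ with $\alpha_1^{-1}(\alpha_2(\delta))<\varepsilon$. Then for any $x_0$ with $\|x_0\|<\delta$ (hence $x_0\in\mx$), using $\rho^k\le 1$ and monotonicity of $\alpha_1^{-1}$ and $\alpha_2$, $\|x_k\|\le \alpha_1^{-1}\!\big(\rho^k\alpha_2(\|x_0\|)\big)\le \alpha_1^{-1}(\alpha_2(\delta))<\varepsilon$ for all $k\in\Zset_+$. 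Combining this with the attractivity established above yields asymptotic stability of the origin in $\mx$.

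There is no deep obstacle here — the proof is standard. The only points needing care are (i) justifying that the trajectory never leaves $\mx$, so that \eqref{eq:LFcond2} can legitimately be iterated (this is exactly positive invariance of $\mx$), and (ii) the elementary facts about class‑$\mathcal{K}_\infty$ functions — invertibility, that the inverse and the composition are again class $\mathcal{K}_\infty$, and continuity at the origin — which is where all the $\varepsilon$–$\delta$ bookkeeping resides. I would also remark that continuity of $\Phi$ is not required, since in discrete time the $\mathcal{KL}$‑type estimate on $\|x_k\|$ already implies asymptotic stability directly.
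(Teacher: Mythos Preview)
Your argument is correct and is the standard discrete-time Lyapunov proof of asymptotic stability. The paper itself does not supply a proof of this theorem but simply refers to \cite{jiang2002converse, lazarThesis}, where essentially the same argument you wrote can be found.
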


The proof of Thm.~\ref{def:lf} can be found in \cite{jiang2002converse, lazarThesis}. 

\begin{definition} A function $V:\rn\mapsto \Rset_{+}$ is called a \emph{Lyapunov function} (LF) in $\mx$ if it satisfies \eqref{eq:LFcond1} and \eqref{eq:LFcond2}. If $\mx=\rn$, then $V$ is called a \emph{global Lyapunov function}.
\end{definition}

The parameter $\rho$ is called the \emph{contraction rate} of $V$. For any $\Gamma>0$, $\cP_{\Gamma}:=\{ x\in \rn\st V(x)\leq \Gamma\}$ is called a \emph{sublevel set} of $V$.

For the remainder of this paper we consider LFs defined using the infinity norm, i.e.,
\be
\label{eq:polyLF}
V(x)=\|Lx\|_{\infty},\quad L\in \Rset^{l\times n}, l \geq n, l\in \Zset_{+},
\ee
where $L$ has full-column rank. Notice that infinity norm Lyapunov functions are a particular type of polyhedral Lyapunov functions. We opted for this type of function to simplify the exposition but in fact, the proposed abstraction method applies to general polyhedral Lyapunov functions defined by Minkowski (gauge) functions of polytopes in $\Rset^n$ with the origin in their interior.
\begin{proposition}
\label{prop:rhocontractive}
Suppose that $L\in \Rset^{l\times n}$ has full-column rank and $V$ as defined in \eqref{eq:polyLF} is a global LF for system \eqref{eq:dynDisSys} with contraction rate $\rho\in(0,1)$. Then for all $\Gamma>0$ it holds that $\cP_\Gamma$ is a polytope and $0\in \interior(\cP_{\Gamma})$. Moreover, if $\Phi(x)$ takes values arbitrarily from a set $\{Ax \st A \in {\cal A} \}$ for some polyhedral set ${\cal A} \subseteq  \Rset^{n\times n}$, then \emph{for all $\Gamma>0$} it holds that $\cP_\Gamma$ is a $\rho$-contractive polytope for \eqref{eq:dynDisSys}.
\end{proposition}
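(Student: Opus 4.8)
The plan is to handle the two assertions in turn; both follow from elementary properties of the map $V(x)=\|Lx\|_{\infty}$, namely that it is a norm and that it is positively homogeneous. First I would show that $\cP_\Gamma$ is a polytope with $0\in\interior(\cP_\Gamma)$. Writing $[L]_i$ for the $i$-th row of $L$, the inequality $\|Lx\|_{\infty}\le\Gamma$ is equivalent to the $2l$ linear inequalities $-\Gamma\le [L]_i x\le\Gamma$ for $i=1,\dots,l$, so $\cP_\Gamma$ is a closed polyhedron. Since $L$ has full column rank, $Lx=0$ implies $x=0$, hence $x\mapsto\|Lx\|_{\infty}$ is a norm on $\rn$; by equivalence of norms there is $c>0$ with $\|Lx\|_{\infty}\ge c\|x\|_{\infty}$ for all $x$, so $\cP_\Gamma$ is bounded and therefore a polytope. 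Continuity of $V$ together with $V(0)=0<\Gamma$ gives $0\in\interior(\cP_\Gamma)$; in particular $\cP_\Gamma$ is $n$-dimensional, so it qualifies as a polytope in the paper's standing sense. Note that this part uses only that $L$ has full column rank, not that $V$ is a Lyapunov function.

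For the contractivity claim, the key fact is positive homogeneity: for $\lambda\ge0$, $V(\lambda x)=\|\lambda Lx\|_{\infty}=\lambda\|Lx\|_{\infty}=\lambda V(x)$. From this I would derive $\cP_{\rho\Gamma}=\rho\,\cP_\Gamma$: indeed $y\in\cP_{\rho\Gamma}$ iff $V(y)\le\rho\Gamma$ iff $V(y/\rho)\le\Gamma$ iff $y/\rho\in\cP_\Gamma$ iff $y\in\rho\,\cP_\Gamma$ (here $\rho\in(0,1)$, so dividing by $\rho$ is legitimate).

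Next I would unpack the hypothesis: when $\Phi(x)$ ranges arbitrarily over $\{Ax\st A\in\mathcal{A}\}$, requiring \eqref{eq:LFcond2} to hold for every admissible successor value is exactly $V(Ax)\le\rho V(x)$ for all $A\in\mathcal{A}$ and all $x\in\rn$. Then for any $x\in\cP_\Gamma$ we have $V(x)\le\Gamma$, hence $V(Ax)\le\rho V(x)\le\rho\Gamma$, i.e. $Ax\in\cP_{\rho\Gamma}=\rho\,\cP_\Gamma$, for every $A\in\mathcal{A}$. Since every successor of $x$ under \eqref{eq:dynDisSys} is of the form $Ax$ with $A\in\mathcal{A}$, we conclude $\Phi(x)\in\rho\,\cP_\Gamma$ for all $x\in\cP_\Gamma$, which is exactly $\rho$-contractivity of $\cP_\Gamma$ in the sense of Definition~\ref{defPI}.

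I do not anticipate a real obstacle: the argument is essentially bookkeeping around the norm structure of $V$. The one point deserving care is that the polyhedral structure of $\mathcal{A}$ plays no role here — contractivity needs only the Lyapunov inequality to hold uniformly over $\mathcal{A}$, and $\mathcal{A}$ being polyhedral matters only later (for the $\pre$ computations). A minor additional subtlety is verifying that $\cP_\Gamma$ is full-dimensional so that it is a ``polytope'' under the paper's convention, which is why I would prove $0\in\interior(\cP_\Gamma)$ before anything else.
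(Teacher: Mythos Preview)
Your proof is correct. The paper does not supply its own argument for this proposition; it simply remarks that the result is ``a straightforward application of results in \cite{blanchini:1994, lazar2010infinity}.'' Your write-up makes that application explicit and self-contained: the polytope claim follows from the full-column-rank assumption making $x\mapsto\|Lx\|_\infty$ a norm, and the contractivity claim follows from positive homogeneity together with the Lyapunov decrease inequality applied uniformly over $\mathcal{A}$. Your side remark that the polyhedral structure of $\mathcal{A}$ is irrelevant to this particular statement is accurate; that hypothesis is there because the cited references are phrased for polytopic uncertainty sets (where one checks the Lyapunov inequality at the vertices and extends by convexity), but once the global LF property is assumed, as it is here, nothing further about $\mathcal{A}$ is needed.
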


The proof of the above result is a straightforward application of results in \cite{blanchini:1994, lazar2010infinity}.

\section{PROBLEM FORMULATION}\label{sec:problem}

In this paper, we consider discrete-time switched linear systems, \ie
\be
\label{eq:linearDyn}
x_{k+1}=A_{\bsigma(k)}x_{k},\quad  \bsigma(k) \in \setI, k\in \Zset_{+},
\ee
where $\bsigma : \Zset_+ \to \setI$ is a switching sequence that selects the active subsystem from a finite index set $\setI$ and
$A_i \in\Rset^{n\times n}$ is a strictly stable (i.e., Schur) matrix for all $i \in \setI$.  We assume that a global polyhedral Lyapunov function (LF) of the form \eqref{eq:polyLF} with contraction rate $\rho\in(0,1)$ is known for system \eqref{eq:linearDyn}. 

Let $\mx$ be a polytope $\mx:=\{x\mid\|Lx\|_\infty\leq \Gamma_\mx\}$ and $\mt$ be a polytope $\mt:=\{x\mid\|Lx\|_\infty\leq \Gamma_\mt\}$, where $L$ corresponds to the polytopic LF \eqref{eq:polyLF} of system \eqref{eq:linearDyn} and we assume that $0<\Gamma_\mt<\Gamma_\mx$.  Note that $\mt\subset \mx$ and $0\in \interior(\mt)\subset \interior(\mx)$.  We call $\mx$ the working set and $\mt$ the target set.  We are interested in synthesis of control strategies and verification of the system behavior within $\mx$ with respect to polytopic regions in the state space, until the target set $\mt$ is reached (since $\mt$ is positively invariant, the system trajectory will be confined within $\mt$ after $\mt$ is reached).  

We assume that there exists a set $\mr$ of polytopes indexed by a finite set $R$, \ie $\mr:=\{\mr_{i}\}_{i\in R}$, where $\mr_{i}\subseteq \mx\setminus\mt$ for all $i\in R$, and $\mr_{i}\cap \mr_{j} =\emptyset$ for any $i\neq j$. The set $\mr$ represents regions of interest in the relevant state space, and the polytopes in $\mr$ are considered as observations of \eqref{eq:linearDyn}.  Therefore, informally, a trajectory of \eqref{eq:linearDyn} $x_{0}x_{1}\ldots$ produces an infinite sequence of observations $o_{0}o_{1}\ldots$, such that $o_{i}$ is the index of the polytope in $\mr$ visited by state $x_{i}$, or $o_{i}=\emptyset$ if $x_{i}$ is in none of the polytopes.

\begin{example}
\label{ex:simpleEx}
Consider a system as in~\eqref{eq:linearDyn}, $\Sigma=\{1,2\}$, $A_1=\begin{pmatrix}-0.65 & 0.32 \\ -0.42 & -0.92\end{pmatrix}$ and $A_2=\begin{pmatrix}0.65 & 0.32 \\ -0.42 & -0.92\end{pmatrix}$.
The algorithm proposed in \cite{lazar2010infinity} is employed to construct a global polytopic LF of the form \eqref{eq:polyLF}, where
\[L=\begin{pmatrix}-0.0625 & 0.6815 & 0.9947 & 0.9947 \\ 1 & 1 & 0.6868 & -0.0678\end{pmatrix}^\top,\]
and $\rho=0.94$.  We chose $\Gamma_\mx=10$ and $\Gamma_\mt=5.063$.  (see Fig.~\ref{fig:levelsets} for polytopes $\mx$, $\mt$, and a set of polytopes $\mr$.)
\end{example}

The semantics of the system can be formalized through an embedding of \eqref{eq:linearDyn} into a transition system, as follows.

\begin{definition}
\label{def:embeddedTS}
Let $\mx$, $\mt$, and $\mr=\{\mr_i\}_{i\in R}$ be given.  The embedding transition system for \eqref{eq:linearDyn} is a transition system $\TS_{e}=(Q_{e}, \Act, \to_{e},\Pi,h_{e})$ where
\begin{itemize}
\item $Q_{e}=\{x\in \rn \st x\in \mx\}$;
\item $\Act$ is the same as the index set given in Eqn~\eqref{eq:linearDyn};
\item \begin{enumerate}
\item If $x\in \mx\setminus\mt$, then  $x \toAct x'$ if and only if $x'=A_{\sigma}x$, \ie $x'$ is the state at the next time-step after applying the dynamics of \eqref{eq:linearDyn} at $x$ when subsystem $\sigma$ is active;
\item If $x\in\mt$, $x \toAct x$ for all $\sigma \in \Act$ (since the target set $\mt$ is already reached, we consider the behavior of the system thereafter no longer relevant);
\end{enumerate}
\item $\Pi=R\cup \{\Pi_\mt\}$, \ie the set of observations is the set of labels of regions, plus the label $\Pi_\mt$ for $\mt$;
\item \begin{enumerate}
  \item $h_{e}(x):=i$ if and only if $x\in \mr_{i}$;
  \item $h_e(x):=\emptyset$ if and only if $x\in\mx\setminus(\mt\cup \bigcup_{i\in R}\mr_{i})$;
  \item $h_e(x):=\Pi_\mt$ if and only if $x\in \mt$.
\end{enumerate}
\end{itemize}
\end{definition}
Note that $\TS_{e}$ is deterministic and it has an infinite number of states. Moreover, $\TS_{e}$ exactly captures the system dynamics under \eqref{eq:linearDyn} in the relevant state space $\mx\setminus\mt$, since a transition of $\TS_{e}$ naturally corresponds to the evolution of the discrete-time system in one time-step.  Indeed, within $\mx \setminus \mt$, the trajectory of $\TS_{e}$ produced by an input word $\bsigma$ from a state $x\in\mx\setminus\mt$ is exactly the same as the trajectory of system~\eqref{eq:linearDyn} from $x$ under the switching sequence $\bsigma$.

The state space of $\TS_e$ (which is the working set $\mx$) can be naturally partitioned as
\be
\label{eq:partition}
P_\mx:=\left\{\{\mr_i\}_{i\in R}, \mx\setminus(\mt\cup \bigcup_{i\in R}\mr_{i}), \mt\right\}.
\ee

The relation induced from partition $P_\mx$ is observation preserving (see Sec. \ref{sec:tsandbisim}).
We now formulate the main problem considered in this paper.
\begin{problem}\label{prob:main}
Let a system \eqref{eq:linearDyn} with a polyhedral Lyapunov function of the form \eqref{eq:polyLF}, sets $\mx$, $\mt$ and $\{\mr_i\}_{i\in R}$ be given.  Compute a finite observation preserving partition $P$ such that its induced relation $\sim$ is a bisimulation of the embedding transition system $\TS_e$, and obtain the corresponding bisimulation quotient $\TS_e\ssim$.
\end{problem}

\begin{remark}
The above assumptions on the sets $\mx$, $\mt$, and $\{\mr_i\}_{i\in R}$ are made for simplicity of presentation.
The problem formulation and the approach described in the rest of the paper can be easily extended to
arbitrary positively invariant sets $\mx$ and $\mt$, \ie not obtained as the sublevel sets of~\eqref{eq:polyLF}, by considering the largest sublevel set that is included in $\mt$ and the smallest sublevel set that includes $\mx$ ($\Gamma_\mt$ and $\Gamma_\mx$ can be made arbitrarily small and  arbitrary large, respectively, so as to capture any compact relevant subset of $\rn$). Also, the set of polytopes of interest $\{\mr_i\}_{i\in R}$ can be relaxed to a finite set of linear predicates in $x$.
\end{remark}

\section{GENERATING THE BISIMULATION QUOTIENT}
\label{sec:genBis}

Starting from a polyhedral Lyapunov function $V(x)=\|L x\|_{\infty}$ with a contraction rate $\rho=(0,1)$ as described in Sec. \ref{sec:sec:lyapunovfun} for system \eqref{eq:linearDyn}, we first generate a sequence of polytopic sublevel sets of the form $\cP_{\Gamma}:=\{x\in \rn \st \|Lx\|_{\infty}\leq \Gamma\}$ as follows.  Recall that $\mx=\cP_{\Gamma_\mx}$ and $\mt=\cP_{\Gamma_\mt}$ for some $0<\Gamma_\mt<\Gamma_\mx$.  We define a finite sequence ${\bar \Gamma}:=\Gamma_{0},\ldots,\Gamma_{N}$, where
\be
\label{eq:gammaseq}
\Gamma_{i+1}=\rho^{-1} \Gamma_{i}, \hspace{3mm} i=0,\ldots,N-2,
\ee
$\Gamma_0:=\Gamma_\mt$, $\Gamma_{N}:= \Gamma_{\mx}$, and $N:=\arg\min_N\{\rho^{-N}\Gamma_0\mid \rho^{-N}\Gamma_0\geq \Gamma_\mx\}$.  This choice of $N$ guarantees that $\cP_{\Gamma_{N-1}}$ is the largest sublevel set defined via \eqref{eq:gammaseq} that is a subset of $\mx$.
Since $\Gamma_{N}$ is  exactly $\Gamma_{\mx}$, $\cP_{\Gamma_{N}}$ is exactly $\mx$.

The sequence $\bar\Gamma$ generates a sequence of sublevel sets $\bar{\cP}_\Gamma:=\cP_{\Gamma_0},\ldots, \cP_{\Gamma_N}$. From the definition of the sublevel sets and $\bar\Gamma$, we have that
\be
\cP_{\Gamma_{0}}\subset\ldots \subset \cP_{\Gamma_{N}}.
\ee

Next, we define a \emph{slice} of the state space as follows:
\be
\label{eq:slices}
\cS_{i}:=\cP_{\Gamma_{i}}\setminus\cP_{\Gamma_{i-1}}, \hspace{3mm} i=1,\ldots,N.
\ee
For convenience, we also denote $\cS_{0}:=\cP_{\Gamma_0}$ (although $\cS_{0}$ is not a slice in between two sublevel sets).  We immediately see that the sets $\{\cS_{i}\}_{i=0,\ldots,N}$ form a partition of $\mx$.  Note that the slices are bounded semi-linear sets (see Sec. \ref{sec:prelim}).

\begin{example}[Example \ref{ex:simpleEx} continued]
\label{ex:step2}
Consider the system given in Example \ref{ex:simpleEx} and $N=11$ in Eqn.~\eqref{eq:gammaseq}. 
The polytopic sublevel sets $\bar{\cP}_\Gamma:=\cP_{\Gamma_0},\ldots, \cP_{\Gamma_{11}}$ are shown in in Fig. \ref{fig:levelsets}.
\begin{figure}[h]
   \center
   \includegraphics[scale=\myscale]{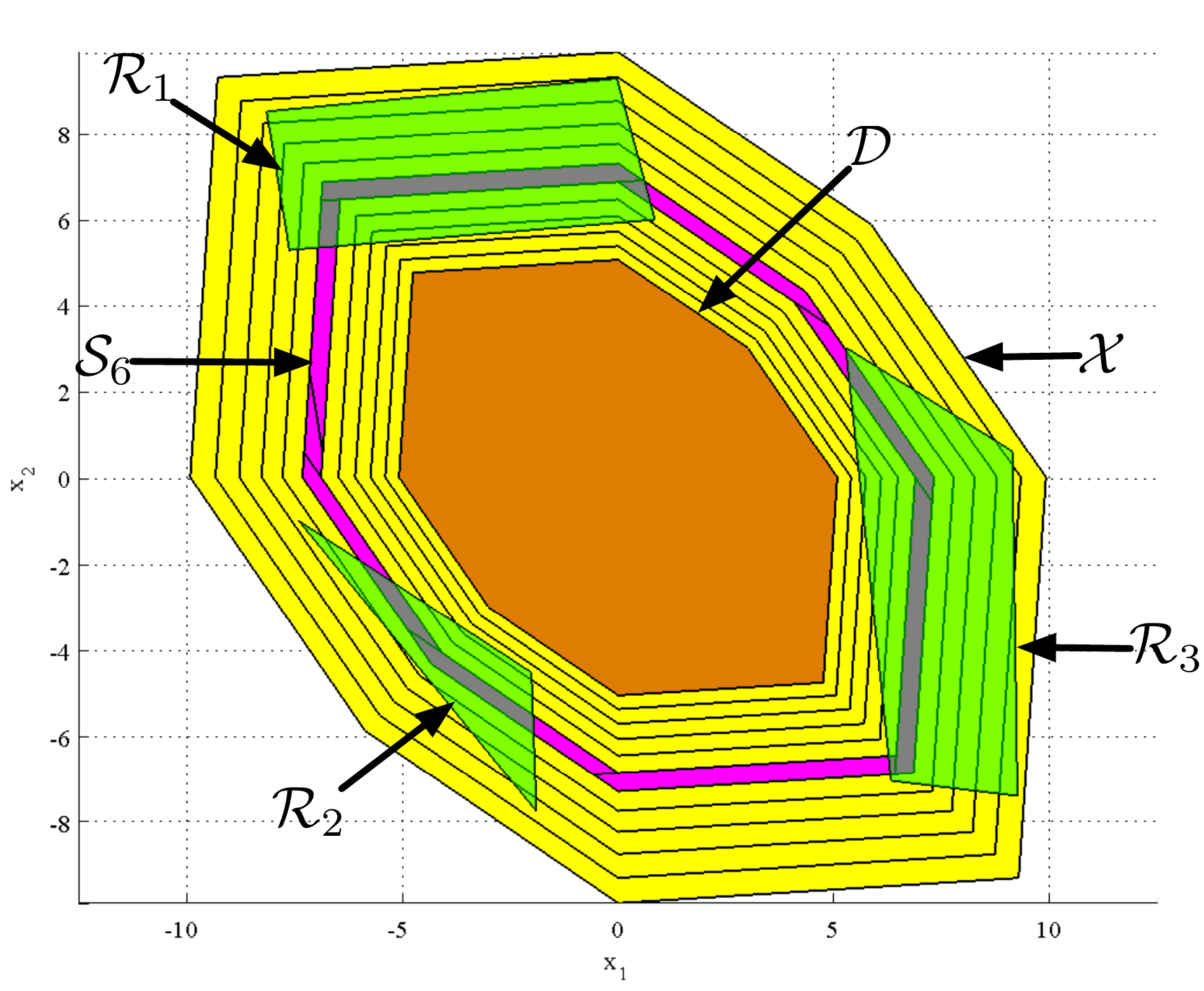}
   \caption{An example in $\Rset^2$ of the working set $\mx$, the target set $\mt$ (in brown), a set of observational relevant polytopes $\mr=\{\mr_1, \mr_2, \mr_{3}\}$ (in transparent green), sublevel sets with $N=11$ and one slice $\cS_{6}$ (in purple).}
   \label{fig:levelsets}
\end{figure}
\end{example}
\begin{proposition}
\label{prop:slicetrans}
Assume that the set of slices $\{\cS_{i}\}_{i=0,\ldots,N}$ is obtained from a sequence ${\bar\Gamma}$ satisfying \eqref{eq:gammaseq}.  Given a state $x$ in the $i$-th slice, \ie $x\in \cS_{i}$, where $1\leq i\leq N$, its successor state ($x'=A_{\sigma}x$, $\sigma \in \Sigma$) satisfies $x'\in\cS_{j}$ for some $j<i$.
\end{proposition}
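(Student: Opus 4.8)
The plan is to reduce everything to the value of the Lyapunov function $V(x)=\|Lx\|_\infty$ together with the geometric spacing \eqref{eq:gammaseq} of the thresholds, which gives $\Gamma_i=\rho^{-i}\Gamma_0$.

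First I would record the elementary description of a slice in terms of level sets: since $\cS_i=\cP_{\Gamma_i}\setminus\cP_{\Gamma_{i-1}}$ and $\cP_\Gamma=\{x\mid V(x)\le\Gamma\}$, for $1\le i\le N$ we have $x\in\cS_i$ if and only if $\Gamma_{i-1}<V(x)\le\Gamma_i$, where the left inequality is strict precisely because $x\notin\cP_{\Gamma_{i-1}}$. In particular $x\in\cS_i$ with $i\ge 1$ implies $x\notin\mt=\cP_{\Gamma_0}$, so the successor prescribed for $x$ in the embedding system is genuinely $x'=A_\sigma x$ and there is nothing special to treat about the target set.

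Next I would invoke the contraction property of the polyhedral Lyapunov function. By assumption $V$ is a global LF for \eqref{eq:linearDyn} with contraction rate $\rho\in(0,1)$, which (cf.\ condition \eqref{eq:LFcond2} applied to each subsystem) means $V(A_\sigma x)\le\rho V(x)$ for every $x\in\rn$ and every $\sigma\in\Sigma$. Combining this with $V(x)\le\Gamma_i$ yields
\[
V(x')=V(A_\sigma x)\le\rho\, V(x)\le\rho\,\Gamma_i=\Gamma_{i-1},
\]
where the last equality is nothing but the defining recursion \eqref{eq:gammaseq}. Hence $x'\in\cP_{\Gamma_{i-1}}$.

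Finally, since $\{\cS_k\}_{k=0,\ldots,N}$ is a partition of $\mx$ and $\cP_{\Gamma_{i-1}}=\bigcup_{k=0}^{i-1}\cS_k\subseteq\mx$, the point $x'$ lies in exactly one $\cS_j$ with $0\le j\le i-1$, i.e.\ $j<i$, which is the claim. I do not anticipate any real obstacle here; the only points deserving a line of care are the strict-versus-nonstrict inequalities in the slice description, and the observation that a single one-step contraction by the factor $\rho$ matches exactly one step down the threshold sequence — which is, in fact, the whole reason \eqref{eq:gammaseq} was defined with ratio $\rho^{-1}$.
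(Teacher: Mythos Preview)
Your proof is correct and follows essentially the same route as the paper's: both show $x'\in\cP_{\Gamma_{i-1}}$ via the contraction by $\rho$ and the recursion \eqref{eq:gammaseq}, then use that the slices partition $\cP_{\Gamma_{i-1}}$. The only cosmetic difference is that the paper phrases the contraction step through Prop.~\ref{prop:rhocontractive} (the sublevel sets are $\rho$-contractive, so $x'\in\rho\cP_{\Gamma_i}=\cP_{\Gamma_{i-1}}$), whereas you invoke the scalar Lyapunov inequality \eqref{eq:LFcond2} directly to get $V(x')\le\rho\Gamma_i=\Gamma_{i-1}$; these are two ways of saying the same thing.
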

\begin{proof}
From Prop. \ref{prop:rhocontractive}, we have that $\cP_{\Gamma_{i}}$ are $\rho$-contractive.  By the definition of a $\rho$-contractive set (Def. \ref{defPI}), we have that $x'=A_{\sigma}x\in \rho \cP_{\Gamma_i}=\{x\in \rn\st \|Lx\|_{\infty}\leq \rho\Gamma_{i}\}$ for all $\sigma \in \Sigma$.   From \eqref{eq:gammaseq}, we have $\rho\Gamma_{i}=\Gamma_{i-1}$.  Therefore $\cP_{\Gamma_{i-1}}=\{x\in \rn\st \|Lx\|_{\infty}\leq \Gamma_{i-1}\}$ implies that $\cP_{\Gamma_{i-1}}=\{x\in \rn\st \|Lx\|_{\infty}\leq \rho\Gamma_{i}\}$ and hence  $\cP_{\Gamma_{i-1}}=\rho \cP_{\Gamma_{i}}$ and $x'\in\cP_{\Gamma_{i-1}}$.  From the definition of slices \eqref{eq:slices}, $x'\in \cS_{j}$ for some $j<i$.
\end{proof}

We now present the abstraction algorithm (see Alg.~\ref{alg:main}) that computes the bisimulation quotient.   In Alg.~\ref{alg:main}, we make use of two procedures $\computepre$ and $\mathtt{RefineUpdate}$, which will be further explained below.  The main idea is to start with $P_{\mx}$ (Eqn.~\eqref{eq:partition}), refine the partition according to $\{\cS_{i}\}_{i=0,\ldots,N}$ to guarantee that it is a refinement to both $P_{\mx}$ as in \eqref{eq:partition} and $\{\cS_{i}\}_{i=0,\ldots,N}$, and then iteratively refine according to the $\pre$ operator (see Eqn.~\ref{eq:presigma}).  The first step, starting with $P_{\mx}$, is necessary so that the partition is observation preserving.  The second step guarantees that each element in the partition is included in a slice. The third step allows us to ensure that at iteration $i$ of the algorithm, the bisimulation quotient for states within $\cP_{\Gamma_{i}}$ is completed. 

\begin{algorithm}[h]
\caption{Abstraction algorithm}
\begin{algorithmic}[1]
\label{alg:main}
\REQUIRE System dynamics \eqref{eq:linearDyn}, polytopic LF $V(x)=\|Lx\|_{\infty}$ with a contractive rate $\rho$, sets $\mx$, $\mt$ and $\{\mr_{i}\}_{i\in R}$.
\ENSURE $\TS_{e}\ssim$ as a bisimulation quotient of the embedding transition system $\TS_{e}$ and the corresponding observation preserving partition $P$.
\STATE Obtain $P_{\mx}$ as in \eqref{eq:partition}.
\STATE Generate the sequence of sublevel sets $\bar\cP_{\Gamma} = \cP_{\Gamma_{0}},\ldots, \cP_{\Gamma_{N}}$ and slices $\cS_{0},\ldots, \cS_{N}$ as defined in\eqref{eq:slices}.
\STATE Set $P_0 = \{ \emptyset \subset \tcp_1 \cap \tcp_2 \st \tcp_1 \in P_{\mx},  \tcp_2 \in \{\cS_{i}\}_{i=0,\ldots,N} \}$.
\vspace{-12pt}
\STATE Initialize $\TS_{e}/_{\sim_{0}}$ by setting $Q_{e}/_{\sim_{0}}$ as the set labeling $P_{0}$. Set transitions only for the state $q\in Q_{e}/_{\sim_{0}}$ where $\eq (q)=\cS_{0}=\mt$ with $q\toActQ_{\sim_{0}} q$ for all $\sigma \in \Sigma$.
\FOR{each $i=0,\ldots,N-1$}
\STATE Set $\TS_{e}/_{\sim_{i+1}} = \TS_{e}/_{\sim_{i}}$ \AND $P_{i+1} = P_{i}$.
\FOR{each $q \in Q_{e}/_{\sim_{i}}$ where $\eq(q) \subseteq \cS_{i}$}
\FOR{each $\sigma \in \Sigma$}
\STATE Find $\pset=\computepre(\eq(q), \sigma)$. \label{algo:computepre}
\STATE Set $[P_{i+1}, \TS_{e}/_{\sim_{i+1}}]=\mathtt{RefineUpdate}$\\$(P_{i+1}, \TS_{e}/_{\sim_{i+1}}, \pset, \sigma,q)$. \label{algo:refineupdatestep} 
\ENDFOR
\ENDFOR
\ENDFOR
\STATE Return $\TS_{e}/_{\sim_{N}}$ and $P_{N}$ as a solution to Prob. \ref{prob:main}.
\end{algorithmic}
\end{algorithm}

The procedure $\computepre(\tcp, \sigma)$ takes as input $\tcp$, which is a bounded semi-linear set (\eg a slice), and $\sigma\in \Sigma$, which is the switching input, and returns the set $\pre_{\TS_{e}}(\tilde \cP, \sigma)$.
If $\tilde \cP$ is a polytope, then $\pre_{\TS_{e}}(\tcp, \sigma)$ is computed as
\be
\label{eq:prepoly}
\pre_{\TS_{e}}(\tcp, \sigma) = \{x\in\rn \st H_{\tcp} A_\sigma x \leq h_{\tcp}\}.
\ee

In general, if $\tcp$ is a semi-linear set, then $\pre_{\TS_{e}}(\tilde \cP, \sigma)$ is also a semi-linear set and it can be computed via quantifier elimination \cite{bochnak1998real}. In particular, $\pre_{\TS_{e}}(\tilde \cP, \sigma)$ for a bounded semi-linear set $\tcp$ can be computed via a convex decomposition and repeated applications of \eqref{eq:prepoly}.  This computation is discussed in more detail in \cite{ADHS2012_LinearSysBisim}. Note that $\computepre(\tcp, \sigma)$ only requires polytopic operations.

The procedure $\mathtt{RefineUpdate}(P, \TS, \tcp, \sigma, q)$ (outlined in Alg.~\ref{alg:partition}) refines a partition $P$ with respect to set $\tcp$, where $\pset=\computepre(\eq(q), \sigma)$.  It then updates $\TS$.  If $P$ consists of only bounded semi-linear sets and $\tcp$ is a semi-linear set, then the resulting refinement $P^{+}$ consists of only bounded semi-linear sets.  This fact allows us to always use $\computepre(\tcp, \sigma)$.

\begin{algorithm}[h]
\caption{$[P^{+}, \TS^{+}]=\mathtt{RefineUpdate}(P, \TS, \tcp, \sigma, q)$}
\begin{algorithmic}[1]
\label{alg:partition}
\REQUIRE A TS $\TS=(Q,\Act,\to,\Pi,h)$, a partition $P$ where $\eq(q') \in P$ for all $q'\in Q$, and $\pset=\computepre(\eq(q), \sigma)$ for some $q\in Q$, $\sigma\in\Sigma$.
\ENSURE $P^{+}$ is a finite refinement of $P$ with respect to $\pset$, $\TS^{+}$ is a TS updated from $\TS$.
\STATE Set $P^{+}=P$ \AND $\TS^{+} = \TS$.
\FORALL{$q' \in Q^{+}$ such that $\eq(q')\cap\tcp\neq \emptyset$}
\STATE Replace $q'$ in $Q^{+}$ by $\{q_1,q_2\}$ and set $\eq(q_1)=\eq(q')\cap\tcp$, $\eq(q_2)=\eq(q')\setminus\tcp$.
\STATE Replace $\eq(q')$ in $P^{+}$ by $\{\eq(q_1),\eq(q_2)\}$.
\STATE Replace each $(q',\sigma',q'')\in\to^{+}$ by $\{(q_i,\sigma',q'')\}_{i=1,2}$. \label{algo:transitionInherit}
\vspace{-13pt}
\STATE Add transition $(q_1,\sigma,q)$ to  $\to^{+}$. \label{algo:transitionAdd}
\ENDFOR
\end{algorithmic}
\end{algorithm}

The correctness of Alg. \ref{alg:main} will be shown by an inductive argument.
Given a sublevel set $\cP_{\Gamma_{i}}$ and a partition $P_{i}$ as obtained in Alg. \ref{alg:main}, we define $\tilde P_{i}$ as
\be
\label{eq:subpartition}
\tilde P_{i}:=\{\pset \in P_{i} \st \pset \subseteq \cP_{\Gamma_{i}}\}.
\ee
From Alg.~\ref{alg:main}, we see that $P_{0}$ partitions all the slices, and since $P_{i}$ is a finite refinement of $P_{0}$, we can directly see that $\tilde P_{i}$ is a partition of $\cP_{\Gamma_{i}}$. Let us define an embedding transition system $\TS_{e}(i)$ as a subset of $\TS_{e}$ with set of states $\{x\in Q_{e} \st x\in \cP_{\Gamma_{i}}\}$ and let us state the following result.
\begin{proposition}
\label{prop:inductionstep}
At the completion of the $i$-th iteration (of the outer loop) of Alg.~\ref{alg:main} (where $P_{i+1}$ is obtained), if $\sim_{i}$ induced by $\tilde P_{i}$ as defined in \eqref{eq:subpartition} is a bisimulation of $\TS_{e}(i)$, then $\sim_{i+1}$ induced by $\tilde P_{i+1}$ is a bisimulation of $\TS_{e}(i+1)$.
\end{proposition}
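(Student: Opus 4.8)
The plan is to verify the two clauses of Definition~\ref{def:bisimulation} for the relation $\sim_{i+1}$ on $\TS_{e}(i+1)$, passing the inductive hypothesis about $\TS_{e}(i)$ through the contractive geometry of the slices (Proposition~\ref{prop:slicetrans}). Clause (1), observation preservation, is the easy half: $\tilde P_{i+1}$ is a refinement of $P_{0}$, which refines the observation-preserving partition $P_{\mx}$ of~\eqref{eq:partition}, and a refinement of an observation-preserving partition is again observation preserving. So the work is clause (2): for every $\sigma\in\Sigma$ and every pair $x_{1}\sim_{i+1}x_{2}$ in $Q_{e}(i+1)=\cP_{\Gamma_{i+1}}$, some $\sigma$-successor of $x_{2}$ must be $\sim_{i+1}$-equivalent to the $\sigma$-successor of $x_{1}$; since $\TS_{e}$ is deterministic these successors are unique, so this reduces to showing $A_{\sigma}x_{1}\sim_{i+1}A_{\sigma}x_{2}$ (states of $\mt$ forming their own class, where the self-loops make the claim trivial).

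Before the case analysis I would record two bookkeeping facts about how Alg.~\ref{alg:main} edits the partition. \emph{Fact (a):} during iteration $i$ no equivalence class contained in $\cP_{\Gamma_{i}}$ is split, so $\sim_{i+1}$ and $\sim_{i}$ agree on $\cP_{\Gamma_{i}}$ (equivalently, $\tilde P_{i}$ is exactly the set of classes of $\tilde P_{i+1}$ lying in $\cP_{\Gamma_{i}}$). Indeed, Alg.~\ref{alg:main} only splits against sets $\pre_{\TS_{e}}(\eq(q),\sigma)$ with $\eq(q)\subseteq\cS_{i}$, and by Proposition~\ref{prop:slicetrans} such a set lies entirely in $\mx\setminus\cP_{\Gamma_{i}}$ when $i\ge 1$ (for $i=0$ the unique class $\mt$ of $\cP_{\Gamma_{0}}$ is split only against one of its supersets, which does nothing); since every class of $P_{i}$ sits inside a single slice, classes inside $\cP_{\Gamma_{i}}$ are untouched. \emph{Fact (b):} in $P_{i+1}$, for every $q\in\tilde P_{i}$ and every $\sigma\in\Sigma$, each class $P'$ contained in $\cS_{i+1}$ is either contained in $\pre_{\TS_{e}}(\eq(q),\sigma)$ or disjoint from it. This follows by tracking, again with Proposition~\ref{prop:slicetrans}, that a class inside a slice $\cS_{m}$ is split only during iterations $0,\dots,m-1$; hence each $q\in\tilde P_{i}$ (say $\eq(q)\subseteq\cS_{m}$, $m\le i$) is already in final form when it is processed in iteration $m$, at which point $\mathtt{RefineUpdate}$ makes the current partition of $\cS_{i+1}$ compatible with $\pre_{\TS_{e}}(\eq(q),\sigma)$, and every later edit only subdivides those pieces, so compatibility survives into $P_{i+1}$.

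With (a) and (b) in hand I would prove clause (2) by a case split on the slice containing the pair; since classes never cross slice boundaries and $\cP_{\Gamma_{i+1}}=\cP_{\Gamma_{i}}\cup\cS_{i+1}$ is a disjoint union, $x_{1},x_{2}$ lie either both in $\cP_{\Gamma_{i}}$ or both in $\cS_{i+1}$. If both are in $\cP_{\Gamma_{i}}$, then by Proposition~\ref{prop:slicetrans} the successors $A_{\sigma}x_{1},A_{\sigma}x_{2}$ stay in $\cP_{\Gamma_{i}}$ and the corresponding transitions are transitions of $\TS_{e}(i)$; fact (a) turns $x_{1}\sim_{i+1}x_{2}$ into $x_{1}\sim_{i}x_{2}$, the inductive hypothesis then yields $A_{\sigma}x_{1}\sim_{i}A_{\sigma}x_{2}$, and fact (a) lifts this back to $\sim_{i+1}$. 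If both are in the common class $P'\subseteq\cS_{i+1}$, fix $\sigma$ and let $q\in\tilde P_{i}$ be the class of $A_{\sigma}x_{1}$ (which lies in $\cP_{\Gamma_{i}}$ by Proposition~\ref{prop:slicetrans}); then $x_{1}\in\pre_{\TS_{e}}(\eq(q),\sigma)$, so that set meets $P'$, hence by fact (b) it contains all of $P'$, in particular $x_{2}$, so $A_{\sigma}x_{2}\in\eq(q)$ and $A_{\sigma}x_{1}\sim_{i+1}A_{\sigma}x_{2}$. This closes the induction step.

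I expect fact (b) to be the main obstacle: it demands an honest monotonicity argument that at the instant a class $q\subseteq\cS_{m}$ is used by Alg.~\ref{alg:main} it is already final, and that every subsequent refinement of $\cS_{i+1}$ only subdivides (never merges) the pieces, so that the splitting information contributed by the various $q\in\tilde P_{i}$ across iterations $0,\dots,i$ is exactly — and only — what clause (2) needs of the pieces of $\cS_{i+1}$. Everything else is routine once Proposition~\ref{prop:slicetrans} is invoked both to localize every $\pre$-set to strictly higher slices and to supply the one-step inward descent of successors used throughout the case analysis.
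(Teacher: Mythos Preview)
Your proof is correct and follows essentially the same route as the paper's: both argue via Proposition~\ref{prop:slicetrans} that successors from $\cS_{i+1}$ land in $\cP_{\Gamma_{i}}$, and both split according to whether the successor falls in $\cS_{i}$ (handled by iteration~$i$) or in some earlier $\cS_{j}$ (handled by an earlier iteration). Your bookkeeping Facts~(a) and~(b) make rigorous two points the paper treats only informally---in particular, the paper's one-line dismissal of the case $x'\in\cS_{j}$ with $j<i$ (``$x$ is already in a set $\eq(q)$ \ldots\ satisfying the bisimulation requirement'') is exactly the monotonicity claim your Fact~(b) supplies, namely that each $q\subseteq\cS_{m}$ is already final when processed at iteration~$m$ and that later refinements only subdivide, so compatibility with $\pre_{\TS_{e}}(\eq(q),\sigma)$ persists into $P_{i+1}$.
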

\begin{proof}
We show that at the end of $i$-th iteration, each transition originating at a state $q \in Q_{e}/_{\sim_{i+1}}$ with $\eq(q) \subseteq \cP_{\Gamma_{i+1}}$ satisfies the bisimulation requirement (Def.~\ref{def:bisimulation}).
By Prop.~\ref{prop:slicetrans}, for each $x\in \cS_{i+1}$ and $\sigma \in \Sigma$, $x'=A_\sigma x$ must be in a slice with a lower index and thus $x'\in\TS_{e}(i)$.  Let $x \in \eq(q) \in P_{i}$. If $x'\in \cS_{i}$, then we have $x\in \pset=\computepre(\eq(q'), \sigma)$ (from step~\ref{algo:computepre} of Alg. \ref{alg:main}) for some $q' \in Q_{e}/_{\sim_{i}}$.
The $\mathtt{RefineUpdate}$ procedure replaces $\eq(q)$ with $\eq(q_1) = \eq(q) \cap \pset$ and $\eq(q_2)=  \eq(q) \setminus \pset$, and updates $\TS_{e}/_{\sim_{i+1}}$.
We note from Eqn.~\eqref{eq:presigma} that for any $x\in \eq(q_1)$, $x'=A_\sigma x\in \eq(q')$, thus for any $x_1, x_2 \in\eq(q_1)$, $x_{1}\sim x_{2}$, $A_\sigma x_{1}\sim A_\sigma x_{2}$. Moreover, the same argument holds for any subset of $\eq(q_1)$. Therefore, the transitions given in steps~\ref{algo:transitionInherit} and~\ref{algo:transitionAdd} of Alg.~\ref{alg:partition} satisfy the bisimulation requirement.
On the other hand, if $x'\notin S_{i}$, then $x'\in S_{j}$ for some $j<i$ and $x$ is already in a set $\eq(q)$, where $q \toActQ_{\sim_{i+1}} \eq(q')$ for some $q'$ satisfying the bisimulation requirement.  Therefore, step $9$ of Alg.~\ref{alg:main} provides exactly the transitions needed for all states in $\cS_{i+1}$ and thus, $\sim_{i+1}$ induced by $\tilde P_{i+1}$ is a bisimulation of $\TS_{e}(i+1)$.
\end{proof} 

\begin{theorem}
Alg. \ref{alg:main} returns a solution to Prob.~\ref{prob:main} in finite time.
\end{theorem}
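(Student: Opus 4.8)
The plan is to split the statement into a termination claim and a correctness claim, and to dispatch the latter essentially by invoking Prop.~\ref{prop:inductionstep} after supplying a base case. For termination I would first note that $N$ is finite: since $\rho\in(0,1)$ we have $\rho^{-N}\Gamma_0\to\infty$, so the minimizing index $N$ in the definition of $\bar\Gamma$ (see \eqref{eq:gammaseq}) exists, and hence $P_0$ is a finite partition — it is built from the finite partition $P_\mx$ (which has at most $|R|+2$ cells) and the $N+1$ slices $\cS_0,\ldots,\cS_N$, so $|P_0|\le(|R|+2)(N+1)$. The outer loop runs exactly $N$ times. Inside iteration $i$, the inner double loop ranges over the finite set $\{q\in Q_{e}/_{\sim_{i}}:\eq(q)\subseteq\cS_{i}\}$ and over the finite input set $\Sigma$, hence issues finitely many calls to $\computepre$ and $\mathtt{RefineUpdate}$; since each $\mathtt{RefineUpdate}$ call only splits existing cells (into at most two pieces each), it at most doubles the cardinality of the partition, so $|P_{i+1}|$ remains finite — bounded by $|P_i|$ times a fixed power of two. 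Finally, each individual call terminates: $\computepre$ of a bounded semi-linear set reduces, via convex decomposition and repeated use of \eqref{eq:prepoly}, to finitely many polytopic operations, and $\mathtt{RefineUpdate}$ performs finitely many polytopic intersections and set differences plus bookkeeping on a finite transition system. Thus the algorithm halts after finitely many elementary operations and returns a finite partition $P_N$ and finite quotient $\TS_e/_{\sim_N}$.

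For correctness I would prove by induction on $i$ that the relation $\sim_i$ induced by $\tilde P_i$ (see \eqref{eq:subpartition}) is a bisimulation of $\TS_e(i)$. The base case $i=0$: because $\mt=\cS_0$ is the only slice contained in $\cP_{\Gamma_0}$ and $\mt$ is disjoint from every $\mr_i$ and from $\mx\setminus(\mt\cup\bigcup_{i\in R}\mr_i)$, the construction of $P_0$ forces $\tilde P_0=\{\mt\}$, a single equivalence class carrying the single observation $\Pi_\mt$; the only transitions of $\TS_e(0)$ are the self-loops $x\toAct x$ on $\mt$, and step~4 of Alg.~\ref{alg:main} equips the unique quotient state with exactly the matching self-loops, so $\sim_0$ is observation preserving and the bisimulation condition holds vacuously. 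The inductive step is precisely Prop.~\ref{prop:inductionstep}, whose proof also records that the transitions maintained by the algorithm are exactly those of the quotient. Since $\Gamma_N=\Gamma_\mx$, we have $\cP_{\Gamma_N}=\mx=Q_e$, hence $\TS_e(N)=\TS_e$ and $\tilde P_N=P_N$; therefore $\sim_N$ is a bisimulation of $\TS_e$ and $\TS_e/_{\sim_N}$ is the corresponding bisimulation quotient. Observation preservation of $P_N$ is inherited along the refinement chain: $P_0$ refines the observation-preserving partition $P_\mx$, and every $\mathtt{RefineUpdate}$ only subdivides cells, so $P_N$ is a refinement of $P_0$ and hence observation preserving. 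Consequently $(\TS_e/_{\sim_N},P_N)$ solves Prob.~\ref{prob:main}.

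The main obstacle — and the reason for the slice construction — is the termination argument, specifically the a~priori finiteness of each $P_i$. The classical partition-refinement bisimulation algorithm need not terminate because refining a cell can force re-refinement of every cell that reaches it, producing an unbounded cascade. Here Prop.~\ref{prop:slicetrans} severs this circularity: every successor of a state in $\cS_i$ lies in a strictly lower slice, so stabilizing $\cS_i$ only requires preimages of the already-finalized lower slices and never feeds back into $\cS_i$ or any higher slice. That is exactly what lets the inner loop of iteration $i$ range over the \emph{fixed} finite set $\{q\in Q_e/_{\sim_i}:\eq(q)\subseteq\cS_i\}$, bounding the number of $\mathtt{RefineUpdate}$ calls and hence $|P_{i+1}|$. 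The only remaining technical point is that all the set operations stay within the class of bounded semi-linear sets so that $\computepre$ is always applicable; this is immediate from the closure properties noted after Alg.~\ref{alg:partition}, and I would state it as a one-line invariant maintained by the loop.
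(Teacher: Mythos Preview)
Your proposal is correct and follows essentially the same approach as the paper's own proof: observation preservation via the refinement chain from $P_\mx$, correctness by induction on the slice index with Prop.~\ref{prop:inductionstep} as the inductive step and $\tilde P_0=\{\mt\}$ as the base case, then identifying $\TS_e(N)$ with $\TS_e$, and termination from the finiteness of $N$, $\Sigma$, and each $P_i$. Your write-up is more detailed than the paper's --- in particular your observation that Prop.~\ref{prop:slicetrans} is what breaks the usual refinement cascade is not made explicit there --- and the only minor imprecision is the phrase ``a fixed power of two,'' since the exponent actually depends on the number of cells in $\cS_i$ times $|\Sigma|$; but finiteness of $|P_{i+1}|$ is all you need, and you have that.
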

\begin{proof}
From Alg.~\ref{alg:partition}, we have that $P_{i}$ is a refinement of $P_{\mx}$ for any $i=0,\ldots,N$.  Therefore, $P_{N}$ and its induced relation $\sim_{N}$ are observation preserving.

At step $4$ of Alg. \ref{alg:main}, we set $q\toActQ_{\sim_{0}} q, \forall \sigma \in \Sigma$ where $\eq(q)=\mt$.  From the definition of $\TS_{e}$, we see that since $\mt$ is the only state, $\sim_{0}$ induced by $\tilde P_{0}$ is a bisimulation of $\TS_{e}(0)$.  Using Prop. \ref{prop:inductionstep} and induction, at iteration $N-1$, we have that $\sim_{N}$ induced by $\tilde P_{N}$ is a bisimulation of $\TS_{e}(N)$.   Note that $\tilde P_{N}$ is exactly $P_{N}$, $\cP_{\Gamma_{N}}$ is exactly $\mx$ and $\TS_{e}(N)$ is exactly $\TS_{e}$.  Therefore $\sim_{N}$ induced by $P_{N}$ is a bisimulation of $\TS_{e}$.

Finally, note that at each iteration $i$, the number of updated sets is finite as the partition $P_i$ and the set of inputs $\Sigma$ are finite.  Therefore, the bisimulation quotient is finite and Alg.~\ref{alg:main} completes in finite time.
\end{proof}

\begin{figure*}[ht]
	\centering
	\subfloat[]{\includegraphics[scale=\myscaleex]{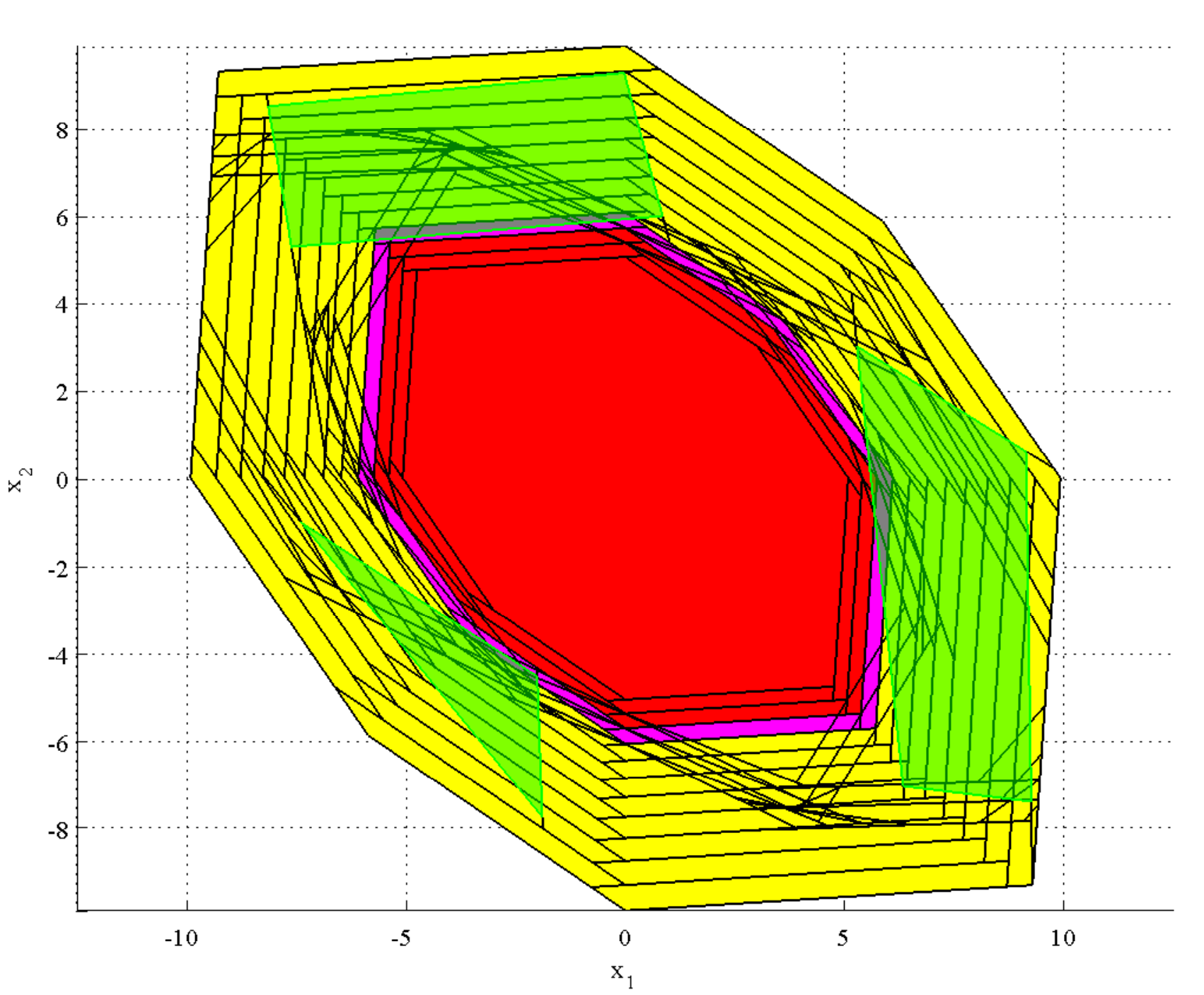} \label{fig:it2e}}
	\subfloat[]{\includegraphics[scale=\myscaleex]{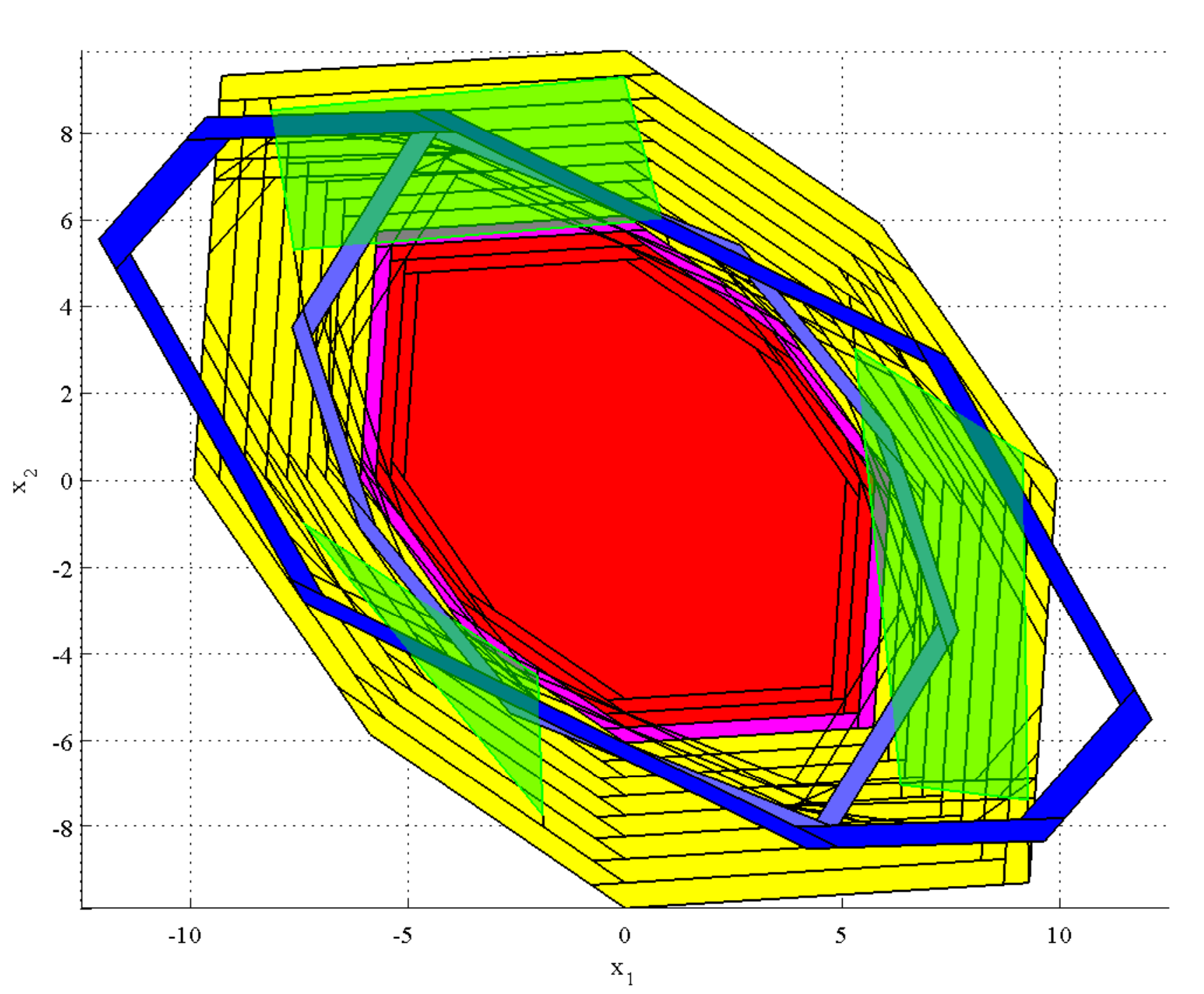}  \label{fig:it3s}}
	\subfloat[]{\includegraphics[scale=\myscaleex]{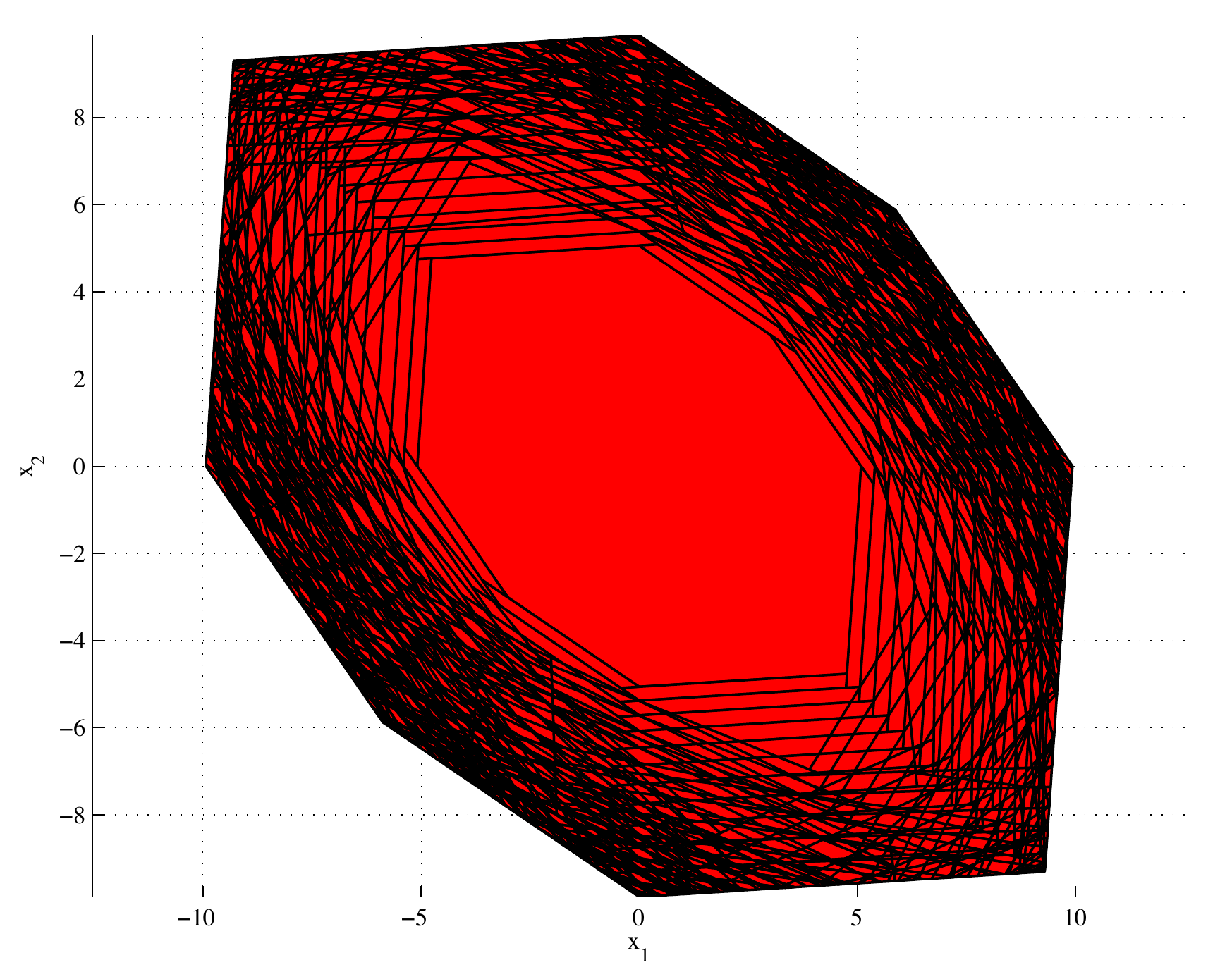}\label{fig:itend}}
\caption{The observed regions are shown in transparent green in (a) and (b). (a) At the end of the third iteration ($i=2$), the bisimulation quotient for states within $\cP_{\Gamma_{3}}$ is completed, which are shown in red and purple. In the forth iteration, the states within $\cP_{\Gamma_{11}} \setminus \cP_{\Gamma_{3}}$ will be partitioned according to $\pre_{\TS_{e}}(\tcp, \sigma), \pset \in \cS_{3}$. (b) $\cS_{3}$ is shown in purple, and $\pre_{\TS_{e}}(\cS_{3}, 1)$ and $\pre_{\TS_{e}}(\cS_{3},2)$ are shown in light and dark blue. (c) At the last iteration where $i=10$, the algorithm is completed.  The state space covered by the bisimulation quotient is shown in red, covering all of $\mx$.}
\label{fig:snapshots}
\end{figure*}

\begin{example}[Example \ref{ex:step2} continued]
 \label{ex:step3}
Alg.~\ref{alg:main} is applied on the same setting as in Example~\ref{ex:step2} to compute the bisimulation quotient.
$P_3$ and $P_{11}$ are shown in Fig.~\ref{fig:snapshots}.
\end{example}

\section{TEMPORAL LOGIC SYNTHESIS AND VERIFICATION}
\label{sec:verification}
After we obtain a bisimulation quotient for system~\eqref{eq:linearDyn}, we can solve verification and controller synthesis problems from temporal logic specifications such as CTL*, CTL and LTL.
The asymptotic stability assumption implies that all trajectories of~\eqref{eq:linearDyn} sink in $\mt$.
For this reason, we will focus on syntactically co-safe fragment of LTL, which includes all specifications of LTL where satisfactions of trajectories can be determined by a finite prefix. Since we are interested in the behavior of~\eqref{eq:linearDyn} until $\mt$ is reached, scLTL is sufficiently rich as the specification language.

A detailed description of the syntax and semantics of scLTL is beyond the scope of this paper and can be found in, for example, \cite{Vardi:safety, Clarke99}.  Roughly, an scLTL formula is built up from a set of atomic propositions $\Pi$, standard Boolean operators $\neg$ (negation), $\LTLor$ (disjunction), $\LTLand$ (conjunction), $\Rightarrow$ (implication) and temporal operators $\LTLNext$ (next), $\LTLUntil$ (until) and $\LTLEvent$ (eventually). The semantics of scLTL formulas is given over infinite words $\bo=o_{0}o_{1}\ldots$, where $o_{i} \in 2^{\Pi}$ for all $i$.  We write $\bo\vDash\phi$ if the word $\bo$ satisfies the scLTL formula $\phi$.  We say a trajectory $\bq$ of a transition system $\TS$ satisfies scLTL formula $\phi$, if the word generated by $\bq$ (see Def. \ref{def:tran_sys}) satisfies $\phi$. 

\begin{example}\label{ex:LTLexample}
Again, consider the setting in Example \ref{ex:simpleEx} with $\mr=\{\mr_i\}_{i=\{1,2,3\}}$.  We now consider a specification in scLTL over $\{\mr_{1},\mr_{2},\mr_{3}, \Pi_\mt\}$.  For example, the specification \emph{``A system trajectory never visits $\mathcal R_{2}$ and eventually visits $\mr_{1}$.  Moreover, if it visits $\mr_{3}$ then it must not visit $\mr_{1}$ at the next time step''} can be translated to a scLTL formula:
\begin{equation}
\label{eq:exampleLTLformula}
\phi:= ( \neg \mr_{2} \LTLUntil \Pi_ \mt) \LTLand \LTLEvent \mr_{1} \LTLand ((\mr_{3} \Rightarrow \LTLNext \neg \mr_{1} ) \LTLUntil  \Pi_\mt)
\end{equation}
\end{example}

\subsection{Synthesis of switching strategies}

In this section, we assume that we can choose the dynamics $A_\sigma$, $\sigma\in\Sigma$ to be applied at each step $k$. Our goal is to find a set of initial states and a switching sequence (i.e., a sequence of elements from $\Sigma$ to be applied at each step)
for each initial state such that all the corresponding trajectories of system \eqref{eq:linearDyn} satisfy a temporal logic specification. Formally, we consider the following problem:

\begin{problem}
\label{prob:LTL}
Consider system \eqref{eq:linearDyn} with a polyhedral Lyapunov function in the form of \eqref{eq:polyLF}, sets $\mx$, $\mt$ and $\{\mr_i\}_{i\in R}$, and a scLTL formula $\phi$ over $R\cup \{\Pi_\mt\}$.  
Find the largest set $\Qsat \subseteq \mx$ and a function $\Omega: \Qsat \mapsto \setI^*$ such that
the trajectory of system~\eqref{eq:linearDyn} initiated from a state $x_0 \in \Qsat$ under the switching sequence $\Omega(x_0)$ satisfies $\phi$.
\end{problem}

As a switched system is deterministic, it produces a unique trajectory for a given initial state and switching sequence.
This fact allows us to provide a solution to Problem \ref{prob:LTL} as an assignment of a switching sequence to each initial state. Our solution to Prob. \ref{prob:LTL} proceeds by finding a bisimulation quotient $\TS_{e}/_\sim$ of the embedding transition system $\TS_e$ using Alg. \ref{alg:main}.  Then we translate $\phi$ to a Finite State Automaton (FSA), defined below.

\begin{definition} A deterministic finite state automaton (FSA) is a tuple $\FSA = (S_\FSA, S_{\FSA0}, \Sigma, \delta_\FSA, F_\FSA)$ where
\begin{itemize}
	\item $S_\FSA$ is a finite set of states;
	\item $S_{\FSA0} \subseteq S_\FSA$ is a set of initial states;
	\item $\Sigma$ is an input alpabet;
	\item $\delta_\FSA: S_\FSA \times \Sigma \rightarrow S_\FSA$ is a transition function;
	\item $F_\FSA \subseteq S_\FSA$ is a set of final states.
\end{itemize}
\end{definition}
A word $\bsigma=\sigma_0\ldots \sigma_{d-1}$ over $\Sigma$ generates a trajectory $s_0\ldots s_{d}$, where $s_0 \in S_{\FSA0}$ and $\delta(s_i,\sigma_i) = s_{i+1}$ for all $i=0,\ldots,d-1$. $\FSA$ accepts word $\bsigma$ if $s_{d} \in F_{\FSA}$.

For any scLTL formula $\phi$ over $\Pi$, there exists a FSA $\FSA$ with input alphabet $2^\Pi$ that accepts the prefixes of all and only the satisfying words \cite{Vardi:safety,Latvala:scheck}.

\begin{definition}
\label{def:PA}
 Given a transition system $\TS=(Q,\Sigma, \to,\Pi,h)$ and a FSA $\FSA = (S_\FSA, S_{\FSA0}, 2^{\Pi}, \delta_\FSA, F_\FSA)$ , their product automaton, denoted by $\Prod=\TS \times \FSA$, is a tuple $\Prod=(S_\Prod,S_{\Prod0}, \Sigma,\to_\Prod,F_\Prod)$
where
\begin{itemize}
 \item $S_\Prod = Q \times S_{\FSA}$;
 \item $S_{\Prod0} = Q \times S_{\FSA 0}$;
 \item $\to_{\Prod}\subseteq S_{\Prod}\times \Sigma \times S_{\Prod}$ is the set of transitions, defined by: $\left((q,s),\sigma,(q',s')\right)\in \to_{\Prod}$ iff $q \toActQ q'$ and $\delta_{\FSA}(s, h(q)) = s'$;
 \item $F_\Prod = Q \times F_{\FSA}$.
\end{itemize}
We denote $s_{\Prod} \toActQ_{\Prod} s'_{\Prod}$ if $(s_{\Prod}, \sigma, s'_{\Prod})\in \to_{\Prod}$.
A trajectory ${\rm \bp}=(q_{0}, s_{0})\ldots (q_{d}, s_{d})$ of $\Prod$ produced by input word $\bsigma=\sigma_0\ldots \sigma_{d-1}$ is a finite sequence such that $(q_{0}, s_{0})\in S_{\Prod 0}$ and $(q_{k},s_{k}) \stackrel{\sigma_k}{\to}_{\Prod} (q_{k+1},s_{k+1})$ for all $k=0,\ldots,d-1$.  $\bp$ is called accepting if $(q_{d}, s_{d}) \in F_\Prod$.
\end{definition}

By the construction of $\Prod$ from $\TS$ and $\FSA$, $\bp$ produced by $\bsigma$ is accepting if and only if $\bq=\gamma_{\TS}(\bp)$ satisfies the scLTL formula corresponding to $\FSA$ \cite{Clarke99}, where $\gamma_{\TS}(\bp)$ is the projection of a trajectory $\bp$ of $\Prod$ onto $\TS$ by simply removing the automaton part of the state in $s_{\Prod}\in S_{\Prod}$.

We construct the product $\Prod$ between the quotient transition system $\TS_{e}/_\sim$ obtained from Alg.~\ref{alg:main} and FSA $\FSA$ corresponding to specification formula $\phi$. By performing a graph search on $\Prod$, we can find the largest subset $S_{\Prod}^S$ of $S_{\Prod}$ and a feedback control function $\Omega_\Prod : S^S_\Prod \mapsto \Sigma$ such that the trajectories of $\Prod$ originating in $S_\Prod^S$ in closed loop with $\Omega_\Prod$ reach $F_{\Prod}$.
Then, we define the set of satisfying initial states of system~\eqref{eq:linearDyn} from $S_{\Prod}^S$ as
\be \label{eq:XS}
   \Qsat = \{ \eq(q) \mid (q, s) \in  (S_{\Prod0} \cap S^S_{\Prod}) \}.
\ee

Since $\Prod$ is deterministic, $\Omega_\Prod$ defines a unique input word for each $(q_0,s_0) \in S^S_\Prod$. Moreover, an input word of $\Prod$ directly maps to a switching sequence for system~\eqref{eq:linearDyn}. Formally, the switching sequence $\Omega: \Qsat \mapsto \Sigma^*$ is obtained by ``projecting'' $\Omega_\Prod$ from $\Prod$ to $\TS$ as follows:
\be \label{eq:openloop}
   \Omega(x) = \Omega_{\Prod}((q_0,s_0))\ldots\Omega_\Prod((q_{d-1},s_{d-1})) ,
\ee
where $x\in eq(q_0)$, $s_0 \in S_{\FSA 0}$, $(q_i,s_i)\overset{\Omega_{\Prod}((q_i,s_i))}{\xrightarrow{\hspace*{1.8cm}}}_{\Prod} (q_{i+1},s_{i+1})$, for each $i=0,\ldots,d-1$ and  $(q_d,s_d) \in F_{\Prod}$.

\begin{proposition}
$\Qsat$ as defined in Eqn.~\eqref{eq:XS} and function $\Omega$ as defined in Eqn.~\eqref{eq:openloop} solve Prob.~\ref{prob:LTL}.
\end{proposition}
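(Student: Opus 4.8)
The plan is to show two things: (i) every initial state in $\Qsat$ admits a switching sequence under which the resulting trajectory of system~\eqref{eq:linearDyn} satisfies $\phi$, and (ii) $\Qsat$ is the largest such set, i.e., no state outside $\Qsat$ admits any satisfying switching sequence. Both directions rest on the bisimilarity of $\TS_e$ and $\TS_e\ssim$ established in the previous section, together with the standard correspondence between accepting runs of the product automaton $\Prod = \TS_e\ssim \times \FSA$ and satisfying words of $\phi$.

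For direction (i), fix $x_0 \in \Qsat$. By~\eqref{eq:XS} there is an equivalence class $q_0$ with $x_0 \in \eq(q_0)$ and an automaton state $s_0 \in S_{\FSA0}$ such that $(q_0,s_0) \in S_{\Prod0} \cap S^S_\Prod$. By construction of $S^S_\Prod$ and $\Omega_\Prod$, the closed-loop trajectory of $\Prod$ from $(q_0,s_0)$ under $\Omega_\Prod$ is a finite accepting trajectory $\bp = (q_0,s_0)\ldots(q_d,s_d)$ with $(q_d,s_d)\in F_\Prod$. Projecting onto $\TS_e\ssim$ gives a trajectory $q_0 q_1 \ldots q_d$ of the quotient under input word $\bsigma = \Omega(x_0)$ as in~\eqref{eq:openloop}, and by the product construction the word it generates, extended to an infinite word, is accepted by $\FSA$, hence satisfies $\phi$. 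Now I invoke bisimilarity: since $x_0 \in \eq(q_0)$ and $q_0 \toActQ_{\sim} q_1$, there is $x_1 \in \eq(q_1)$ with $x_0 \toAct x_1$ in $\TS_e$; since $\TS_e$ is deterministic (Def.~\ref{def:embeddedTS}), this $x_1 = A_{\sigma_0} x_0$ is the \emph{unique} successor, so it is forced. Iterating, the trajectory of $\TS_e$ (equivalently, of~\eqref{eq:linearDyn}) from $x_0$ under $\bsigma$ visits $\eq(q_0),\eq(q_1),\ldots$ in turn; since the relation is observation preserving, $h_e(x_k) = h_\sim(q_k)$ for all $k$, so the word generated by the $\TS_e$-trajectory equals the word generated by the quotient trajectory, and therefore satisfies $\phi$. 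After reaching $\mt$ the trajectory stays in $\mt$ with a constant observation, and because $\FSA$ accepts on a finite prefix this does not affect satisfaction.

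For direction (ii), suppose $x_0 \in \mx \setminus \Qsat$ and, toward a contradiction, that some switching sequence $\bsigma$ makes the trajectory $\bx = x_0 x_1 \ldots$ of~\eqref{eq:linearDyn} satisfy $\phi$. Let $q_0$ be the class with $x_0 \in \eq(q_0)$. Using the bisimulation property repeatedly — now in the direction that lifts the $\TS_e$-trajectory to the quotient — the sequence of classes $q_k$ containing $x_k$ forms a trajectory of $\TS_e\ssim$ under $\bsigma$, and again generates the same word, which is accepted by $\FSA$ on a finite prefix of length $d$. Hence $(q_0,s_0)\ldots(q_d,s_d)$ is an accepting trajectory of $\Prod$ for a suitable $s_0 \in S_{\FSA0}$, and in particular, under the memoryless strategy realized along this run, $(q_0,s_0)$ can reach $F_\Prod$, so $(q_0,s_0) \in S^S_\Prod$ by maximality of $S^S_\Prod$ (here one uses that $\Prod$ is deterministic, so reachability of $F_\Prod$ from a state is equivalent to existence of a winning memoryless control). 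Then $x_0 \in \eq(q_0) \subseteq \Qsat$ by~\eqref{eq:XS}, a contradiction.

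The main obstacle is bookkeeping around the finiteness of acceptance and the absorbing target set: scLTL words are infinite, yet $\FSA$ decides on a finite prefix, so I must argue carefully that (a) every trajectory of~\eqref{eq:linearDyn} in the working set reaches $\mt$ in finite time (which follows from asymptotic stability, i.e., from the Lyapunov function and Prop.~\ref{prop:slicetrans}, since outside $\mt$ the slice index strictly decreases), so the relevant prefix is well defined; and (b) the bisimilarity-based word equality is only needed up to that finite prefix, after which both the concrete and the quotient trajectory loop in $\mt$ / its class with a fixed observation. The other delicate point is translating ``largest $S^S_\Prod$ with a reachability strategy'' into ``largest $\Qsat$'': this is where determinism of $\Prod$ (inherited from determinism of $\TS_e\ssim$, which in turn comes from $\TS_e$ being deterministic) is essential, since it makes the existence of a satisfying switching sequence for $x_0$ equivalent to membership of some $(q_0,s_0)$ in $S^S_\Prod$.
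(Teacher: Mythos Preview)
Your proposal is correct and follows essentially the same approach as the paper: both directions rely on the bisimilarity between $\TS_e$ and $\TS_e\ssim$ together with the standard product-automaton correspondence, with the maximality of $\Qsat$ argued by contradiction via lifting a hypothetical satisfying concrete trajectory to an accepting run of $\Prod$. Your write-up is more explicit than the paper's about determinism of $\TS_e$ (and hence of $\TS_e\ssim$ and $\Prod$) and about the finite-prefix / absorbing-$\mt$ bookkeeping, but these are elaborations of the same argument rather than a different route.
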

\begin{proof}
For each $x\in \Qsat$, there exists $(q_0,s_0) \in S^S_\Prod$ such that $x\in \eq(q_0)$ and $s_0 \in S_{\FSA 0}$ by Eqn.~\eqref{eq:XS}. By construction of $\Prod$ and definition of $\Omega$ (Eqn.~\eqref{eq:openloop}), the trajectory of $\TS_{e}/_\sim$ originating at $q_0$ and generated by input word $\Omega(x)$ satisfies $\phi$. Then by bisimulation relation the trajectories of~\eqref{eq:linearDyn} originating in $\eq(q_0)$ and generated by switching sequence $\Omega(x)$ satisfy $\phi$.

We prove that $\Qsat$ is the largest set of satisfying initial states by contradiction.
Assume that there exists $x_{0}\notin \Qsat$ such that a trajectory $\bx =x_{0}\ldots x_{d}$ originating at $x_{0}$ of~\eqref{eq:linearDyn} produced by switching sequence $\bsigma=\sigma_0\ldots \sigma_{d-1}$ satisfies $\phi$, and $x_{0}\in \eq(q_0)$ where $q_0~\in~Q_{e}\ssim$. Then by the bisimulation relation 1) there exists a trajectory $\bq =q_{0}\ldots q_{d}$ of $\TS_{e}\ssim$ such that $x_i \in \eq(q_i)$, $q_i \stackrel{\sigma_i}{\to}_{e}\ssim q_{i+1}$ for all $i=0,\ldots,d-1$ and $x_{d} \in \eq(q_d)$, 2) $\bq$ satisfies $\phi$. 
However, we know that on the product $\Prod = \TS_{e}\ssim\times \FSA$, $F_{\Prod}$ is not reachable from $\{ (q_0, s) \st s \in S_{\FSA 0} \}$. 
Hence, a trajectory $\bp$ originating in $\{ (q_0, s) \st s \in S_{\FSA 0} \}$ cannot be accepting on $\Prod$, and by construction of $\Prod$ \cite{Clarke99} $\gamma_{\TS_{e}\ssim}(\bp)$ as a trajectory of $\TS_{e}\ssim$ cannot satisfy formula $\phi$, which yields a contradiction. 
\end{proof}

\begin{example}[Example \ref{ex:LTLexample} continued]
For the example specification $\phi$~\eqref{eq:exampleLTLformula}, we obtained the solution to Prob. \ref{prob:LTL}. 
The FSA has 6 states and the quotient TS obtained from Alg.~\ref{alg:main} has 9677 states. 
The set of initial states $\Qsat$ is shown in Fig.~\ref{fig:ltlresult}. 
\begin{figure}[h]
   \center
   \includegraphics[scale=\myscale]{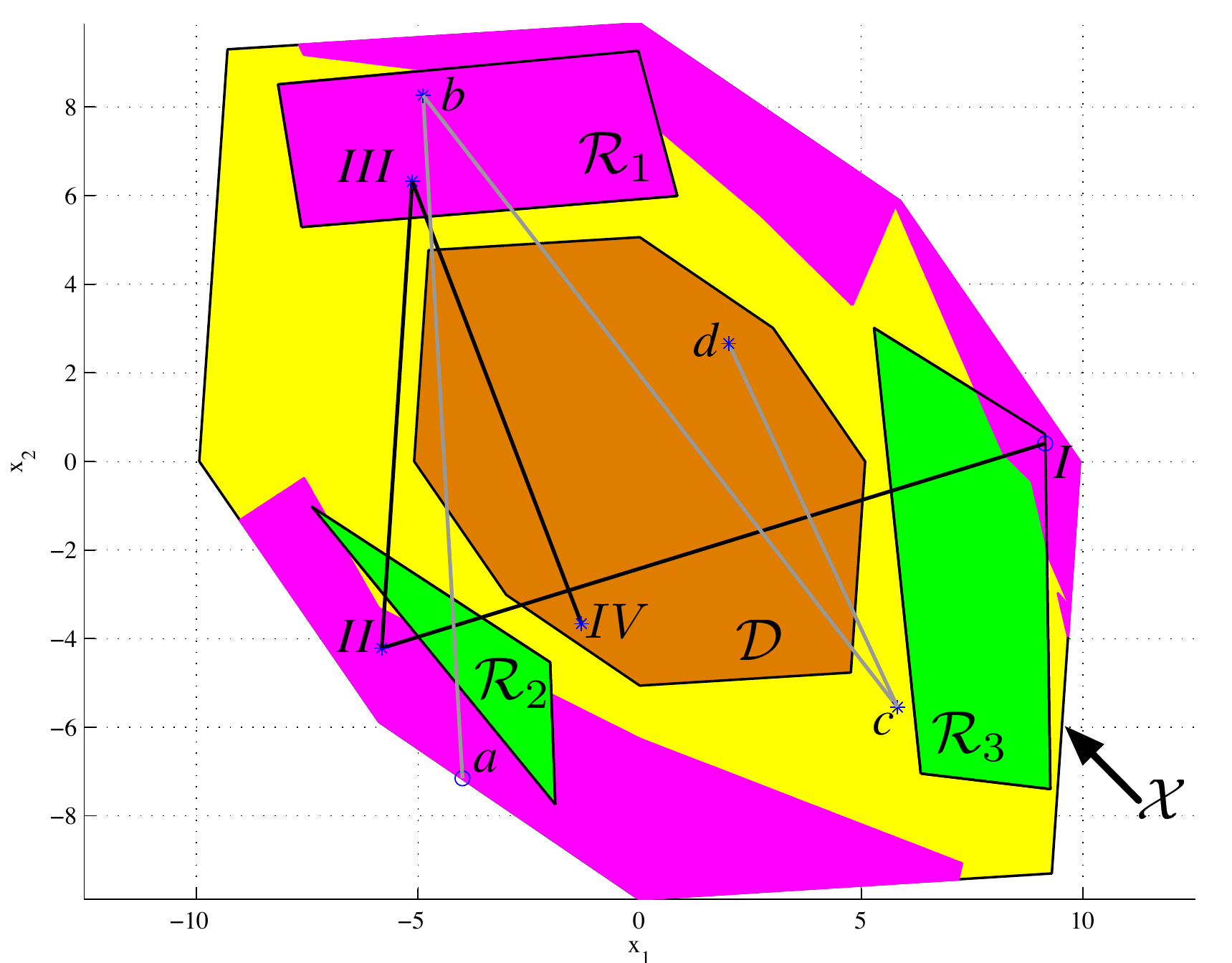}
   \caption{$\Qsat$ is shown in purple. $\mx$, $\mt$, $\{\mr_i\}_{i\in R}$ and two sample trajectories are indicated by their labels.}
   \label{fig:ltlresult}
\end{figure}
\end{example}

\newcommand{\TSEA}{\TS_{e}^A}
\newcommand{\TSQA}{\TS^A_{e}\ssim}
\newcommand{\ActAE}{\to^A_{e}}
\newcommand{\QsatA}{\mathcal{X}^{AS}}
\subsection{Verification under arbitrary switching}
\begin{problem}
\label{prob:LTLverification}
Consider system \eqref{eq:linearDyn} with a polyhedral Lyapunov function in the form of \eqref{eq:polyLF}, sets $\mx$, $\mt$ and $\{\mr_i\}_{i\in R}$, and a scLTL formula $\phi$ over $R \cup \{\Pi_\mt\}$.  
Find the largest set $\QsatA \subseteq \mx$ such that all trajectories of system~\eqref{eq:linearDyn} originating in $\QsatA$ satisfy $\phi$ under arbitrary switching.
\end{problem}

Note that system~\eqref{eq:linearDyn} under arbitrary switching is uncontrolled and non-deterministic, $\ie$ at every time-step a subsystem is arbitrarily chosen from the set $\Sigma$.
Therefore, we define an embedding transition system $\TSEA = \{Q_{e}, \Sigma^A, \to^A_{e}, h_{e}\}$ for the arbitrary switching setup from the embedding transition system $\TS_{e} = \{Q_{e}, \Sigma, \to_{e}, h_{e}\}$ (Def.~\ref{def:embeddedTS}) by adapting the input set and the set of transitions as follows:
\begin{itemize}
	\item $\Sigma^A = \{ \epsilon\}$,
	\item $\to^A_{e} = \{ (q,\epsilon, q') \st \exists \sigma \in \Sigma, (q,\sigma, q') \in \to_{e}\}$.
\end{itemize}
We denote $q \to^A_{e} q'$ if $(q,\epsilon, q') \in  \to^A_{e}$. We use $\epsilon$ as a ``dummy'' input because the transitions of $\TSEA$ are not controlled. Note that $\TSEA$ is infinite and non-deterministic.  Moreover, $\TSEA$ exactly captures dynamics of system~\eqref{eq:linearDyn} under arbitrary switching in the relevant state space $\mx\setminus\mt$.

Our solution to Prob~\ref{prob:LTLverification} parallels the solution we proposed for Prob.~\ref{prob:LTL}.
We first convert the bisimulation quotient  $\TS_{e}\ssim = \{Q_{e}\ssim, \Sigma, \to_{e}\ssim, h_{e}\ssim\}$ of $\TS_{e}$ obtained from Alg.~\ref{alg:main} to $\TSQA = \{Q_{e}\ssim, \Sigma^A, \to^A_{e}\ssim, h_{e}\ssim\}$ as follows:
\begin{itemize}
	\item $\Sigma^A = \{ \epsilon\}$,
	\item $\to^A_{e}\ssim = \{ (q,\epsilon, q') \st \exists \sigma \in \Sigma, (q,\sigma, q') \in \to_{e}\ssim\}$.
\end{itemize}
In this case, we have a particular bisimulation relation. The embedding and the quotient transition systems have a single input that labels all the transitions. 

\begin{proposition}
$\TSQA$ is a bisimulation quotient of $\TSEA$.
\end{proposition}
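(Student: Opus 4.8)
The plan is to show that the same equivalence relation $\sim$ produced by Alg.~\ref{alg:main}, viewed now simply as a relation on $Q_{e}$, is (i) observation preserving for $\TSEA$, (ii) a bisimulation relation of $\TSEA$ in the sense of Def.~\ref{def:bisimulation}, and (iii) that the quotient it induces on $\TSEA$ coincides exactly with $\TSQA$ as defined just above.

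For (i), observe that $\TSEA$ and $\TS_{e}$ have the same state set $Q_{e}$ and the same observation map $h_{e}$, and that $\sim$ is induced by the partition $P_{N}$ of $Q_{e}$, which does not refer to inputs at all. Since we already know (from the correctness argument for Alg.~\ref{alg:main}) that $\sim$ is observation preserving for $\TS_{e}$, it is observation preserving for $\TSEA$ as well. For (ii), note that $\TSEA$ has the single input $\epsilon$, so it suffices to take $x_{1}\sim x_{2}$ with $x_{1}\ActAE x_{1}'$. By the definition of $\ActAE$ there is some $\sigma\in\Sigma$ with $x_{1}\toActG_{e}x_{1}'$; since $\sim$ is a bisimulation of $\TS_{e}$, there is $x_{2}'$ with $x_{2}\toActG_{e}x_{2}'$ and $x_{1}'\sim x_{2}'$, and then $(x_{2},\epsilon,x_{2}')\in\ActAE$ by construction, i.e.\ $x_{2}\ActAE x_{2}'$ with $x_{1}'\sim x_{2}'$. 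Hence $\sim$ is a bisimulation of $\TSEA$, and $\TSEA$ is bisimilar to the quotient $\TSEA\ssim$.

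For (iii), both $\TSEA\ssim$ and $\TSQA$ carry the state set $Q_{e}\ssim$ (same partition $P_{N}$) and the same induced observation map $h_{e}\ssim$, so only the transition relations must be matched. In $\TSEA\ssim$ we have $X\ActAE\ssim Y$ iff there exist $x\in\eq(X)$, $x'\in\eq(Y)$ with $x\ActAE x'$, i.e.\ iff there exist $x\in\eq(X)$, $x'\in\eq(Y)$ and $\sigma\in\Sigma$ with $x\toActG_{e}x'$. Swapping the order of the two existential quantifiers, this is equivalent to: there exists $\sigma\in\Sigma$ such that $X\toActG_{e}\ssim Y$ in the quotient $\TS_{e}\ssim$ — which is precisely the definition of the transition relation of $\TSQA$. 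Therefore $\TSEA\ssim=\TSQA$, and combining with (i)--(ii), $\TSQA$ is a bisimulation quotient of $\TSEA$.

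Essentially every step here is routine; the one point worth stating carefully is (iii), namely that ``forgetting the input labels'' commutes with ``taking the bisimulation quotient.'' This holds exactly because $\sim$ is induced by a partition of the state space and hence is independent of the inputs, so the relabelling $\Sigma\mapsto\{\epsilon\}$ acts identically on $\TS_{e}$ and on $\TS_{e}\ssim$; I expect this quantifier-swap observation to be the only place a reader might pause. For completeness one also records that $\TSEA$ (and $\TSQA$) are non-blocking, inherited from $\TS_{e}$ since every $\sigma$-transition induces an $\epsilon$-transition.
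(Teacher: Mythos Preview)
Your proof is correct and arguably more complete than the paper's. The paper's argument verifies only the transition-matching clause of Def.~\ref{def:bisimulation}: given $q_{1},q_{2}\in\eq(q)$ with $q_{1}\ActAE q_{1}'\in\eq(q')$, it unfolds $\ActAE$ to obtain some $\sigma$ with $q_{1}'=A_{\sigma}q_{1}$, and then appeals directly to the mechanics of Alg.~\ref{alg:main} (specifically that steps~\ref{algo:computepre}--\ref{algo:refineupdatestep} ensure $\eq(q)\subseteq\pre_{\TS_{e}}(\eq(q'),\sigma)$) to conclude $A_{\sigma}q_{2}\in\eq(q')$. You take a more abstract route: rather than re-opening the algorithm, you simply invoke the already-proven fact that $\sim$ is a bisimulation of $\TS_{e}$ and observe that the input-erasing map $\Sigma\mapsto\{\epsilon\}$ preserves the bisimulation property. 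This is cleaner and makes the proposition an immediate corollary of the main theorem. Your part~(iii), showing that $\TSEA\ssim$ coincides with the explicitly defined $\TSQA$ via the quantifier swap, is a point the paper leaves implicit; it is worth recording, since $\TSQA$ was defined by erasing labels on the quotient rather than by quotienting $\TSEA$, and a priori these operations need not commute.
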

\begin{proof}
Let $q_1, q_2 \in \eq(q)$, $q_1' \in \eq(q')$ and $q_1 \ActAE q_1'$, where $q,q' \in Q_{e}\ssim$ and $q_1,q_2,q_1' \in Q_{e}$. To prove the bisimulation property we need to show that there exists $q_2' \in \eq(q')$ such that $q_2 \ActAE q_2'$.

If $q_1 \ActAE q_1'$, then there exists $\sigma \in \Sigma$ such that $q_1\toAct q_1'$, \ie $q_1' = A_\sigma q_1$. Steps~\ref{algo:computepre} and~\ref{algo:refineupdatestep} of Alg.~\ref{alg:main} guarantee that $\eq(q) \subseteq \pre_{\TS_{e}}(\eq(q'), \sigma)$. Therefore, for all $q_i \in \eq(q)$, $A_\sigma q_i \in \eq(q')$, and hence for all $q_i \in \eq(q)$, $q_i \ActAE q_j$ for some $q_j \in \eq(q')$.
\end{proof}

Parallel to our solution to Prob.~\ref{prob:LTL}, we construct a FSA $\FSA$ corresponding to specification formula $\phi$, and then we take the product $\Prod^A=(S^A_\Prod,S^A_{\Prod0}, \Sigma^A,\to^A_\Prod,F^A_\Prod)$ between $\TSQA$ and $\FSA$ as described in Def.~\ref{def:PA}. Note that $\Prod^A$ is non-deterministic as $\TSQA$ is non-deterministic.

To finally solve Prob. \ref{prob:LTLverification}, we formulate the fixed point problem:
\be\label{eq:fixedpoint}
J(s_{\Prod}) = \min(J(s_{\Prod}), \max_{s_{\Prod} \to^A_\Prod s'_{\Prod}}{J(s'_{\Prod}) + 1} ),
\ee
initialized with $J(s_{\Prod}) = \infty$ for all $s_{\Prod} \in S^A_\Prod \setminus F^A_\Prod$ and $J(s_{\Prod}) = 0$ for all $s_{\Prod} \in F^A_\Prod$.
\begin{proposition} Let $S^{AS}_{\Prod} = \{s_{\Prod} \in S^{A}_{\Prod} \st J(s_{\Prod}) < \infty\}$ and define $\QsatA = \{ \eq(q) \mid (q, s) \in  (S^A_{\Prod0} \cap S^{AS}_{\Prod}) \}$. Then
$\QsatA$ solves Prob.~\ref{prob:LTLverification}.
\end{proposition}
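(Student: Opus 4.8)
The plan is to show two things: (i) every state $x \in \QsatA$ has all of its trajectories satisfying $\phi$, and (ii) $\QsatA$ is the largest such set. The key tool is that $\TSQA$ is a bisimulation quotient of $\TSEA$ (established in the preceding proposition) together with the standard correspondence between accepting runs of the product automaton $\Prod^A$ and satisfying words of $\phi$. First I would unpack the fixed point computation \eqref{eq:fixedpoint}: a standard argument shows that $J(s_{\Prod}) < \infty$ if and only if \emph{every} path in $\Prod^A$ originating at $s_{\Prod}$ reaches $F^A_{\Prod}$ in a finite (and uniformly bounded) number of steps — this is exactly the AND-OR / alternating reachability fixed point, where the $\max$ over successors encodes the adversarial (arbitrary switching, hence non-deterministic) choice and the outer $\min$ with the current value makes it the least fixed point. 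So $S^{AS}_{\Prod}$ is precisely the set of product states from which $F^A_{\Prod}$ is \emph{inevitably} reached.

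For direction (i): take $x \in \QsatA$, so there is $(q,s) \in S^A_{\Prod 0} \cap S^{AS}_{\Prod}$ with $x \in \eq(q)$ and $s \in S_{\FSA 0}$. Since $\Prod^A$ is non-deterministic and $J(q,s) < \infty$, every trajectory of $\Prod^A$ from $(q,s)$ is accepting. By the product construction (Def.~\ref{def:PA} and \cite{Clarke99}), every trajectory of $\TSQA$ from $q$ generates a word accepted by $\FSA$, i.e.\ satisfying $\phi$. Now I invoke the bisimulation between $\TSEA$ and $\TSQA$: any trajectory of $\TSEA$ (equivalently, any trajectory of system \eqref{eq:linearDyn} under arbitrary switching) starting at $x \in \eq(q)$ can be matched step-by-step by a trajectory of $\TSQA$ starting at $q$, with states in corresponding equivalence classes; since the partition is observation preserving, the two trajectories generate the same word, which therefore satisfies $\phi$. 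One subtlety here is the sink behavior in $\mt$: because trajectories that enter $\mt$ self-loop, and because $\phi$ is syntactically co-safe (satisfaction is witnessed by a finite prefix), the inevitable reachability of $F^A_{\Prod}$ really does certify satisfaction for the genuinely infinite trajectories of \eqref{eq:linearDyn}; I would state this matching of finite prefixes carefully.

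For direction (ii), I argue by contradiction exactly as in the proof for Prob.~\ref{prob:LTL}. Suppose $x_0 \notin \QsatA$ with $x_0 \in \eq(q_0)$, yet all trajectories of \eqref{eq:linearDyn} from $x_0$ under arbitrary switching satisfy $\phi$. By the bisimulation, every trajectory of $\TSQA$ from $q_0$ generates a word satisfying $\phi$ — here I must use the bisimulation in the direction "quotient trajectory lifts to a concrete trajectory," so that a hypothetical $\phi$-violating trajectory of $\TSQA$ would lift to a $\phi$-violating trajectory of \eqref{eq:linearDyn}. Hence every trajectory of $\Prod^A$ from $\{(q_0,s) : s \in S_{\FSA 0}\}$ is accepting, so $F^A_{\Prod}$ is inevitably reached from each such state, giving $J(q_0,s) < \infty$ for some (indeed all relevant) $s \in S_{\FSA 0}$, whence $(q_0,s) \in S^A_{\Prod 0} \cap S^{AS}_{\Prod}$ and $x_0 \in \QsatA$, a contradiction.

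The main obstacle I anticipate is not the bisimulation bookkeeping but the precise justification that the least fixed point of \eqref{eq:fixedpoint} computes \emph{inevitable} (universally quantified over paths) reachability of $F^A_{\Prod}$ on a \emph{non-deterministic} graph, and that finiteness of $J$ on the quotient — which is a finite object — transfers, via bisimulation, to a statement about all infinite trajectories of the original infinite system. This requires observing that $\Prod^A$ is finite so the fixed point iteration terminates, that $J(s_{\Prod})$ equals the worst-case number of steps to $F^A_{\Prod}$, and that an scLTL word is accepted as soon as a good finite prefix appears, so the bounded-horizon guarantee on the quotient is exactly what is needed. The rest is a routine application of Def.~\ref{def:bisimulation} and the product-automaton correspondence already used earlier in the section.
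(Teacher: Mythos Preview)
Your proposal is correct and follows essentially the same approach as the paper: both directions rely on (a) interpreting the fixed point \eqref{eq:fixedpoint} as computing the set of product states from which $F^A_{\Prod}$ is inevitably reached under non-deterministic transitions, (b) the standard product-automaton correspondence, and (c) the bisimulation between $\TSEA$ and $\TSQA$ to transfer satisfaction/violation between the concrete system and the quotient. You are simply more explicit than the paper about the fixed-point semantics, the finite-prefix property of scLTL, and the sink behavior in $\mt$; the paper's own proof compresses these into one-line appeals to ``the fixed point algorithm,'' ``construction of $\Prod^A$,'' and ``bisimulation property.''
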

\begin{proof}
For each $x\in \QsatA$, there exists $(q_0,s_0) \in S^{AS}_\Prod$ such that $x\in \eq(q_0)$ and $s_0 \in S_{\FSA 0}$. 
The fixed point algorithm guarantees that every trajectory of $\Prod^A$ originating at $(q_0,s_0)$ reaches $F^A_\Prod$.
Then, construction of $\Prod^A$ and bisimulation relation guarantee that all of the trajectories of~\eqref{eq:linearDyn} originating in $\eq(q_0)$ satisfy $\phi$.

If $x_0 \notin \QsatA$, we need to show that there exists a trajectory $\bx = x_{0}\ldots x_{d}$ of~\eqref{eq:linearDyn} that violates $\phi$.
  Let $x_0 \in \eq(q_{0})$. If $x_0 \notin \QsatA$, then for all $s_0 \in S_{\FSA 0}$ there exists a trajectory $\bp = (q_0, s_0)\ldots(q_d, s_d)$ of $\Prod^A$ that can not reach $F^A_{\Prod}$, otherwise $(q_0, s_0)$ would be included in $S^{AS}_{\Prod}$. Since $\bp$ can not reach $F^A_{\Prod}$, $\bq=\gamma_{\TS}(\bp)$ violates $\phi$. By bisimulation property, there exists a trajectory $\bx = x_{0}\ldots x_{d}$ of~\eqref{eq:linearDyn} that produces the same word as $\bq$, and hence $\bx$ violates $\phi$.
\end{proof}

\begin{example}[Example \ref{ex:LTLexample} continued]
For the example specification $\phi$ as in \eqref{eq:exampleLTLformula}, we obtained the solution to Prob. \ref{prob:LTLverification}.  $\QsatA$ and sample trajectories are shown in~Fig. \ref{fig:ltlresultverification}.  Note that this is a subset of the set of initial states found for the synthesis problem (see Fig.\ref{fig:ltlresult}).
\begin{figure}[h]
   \center
   \includegraphics[scale=\myscale]{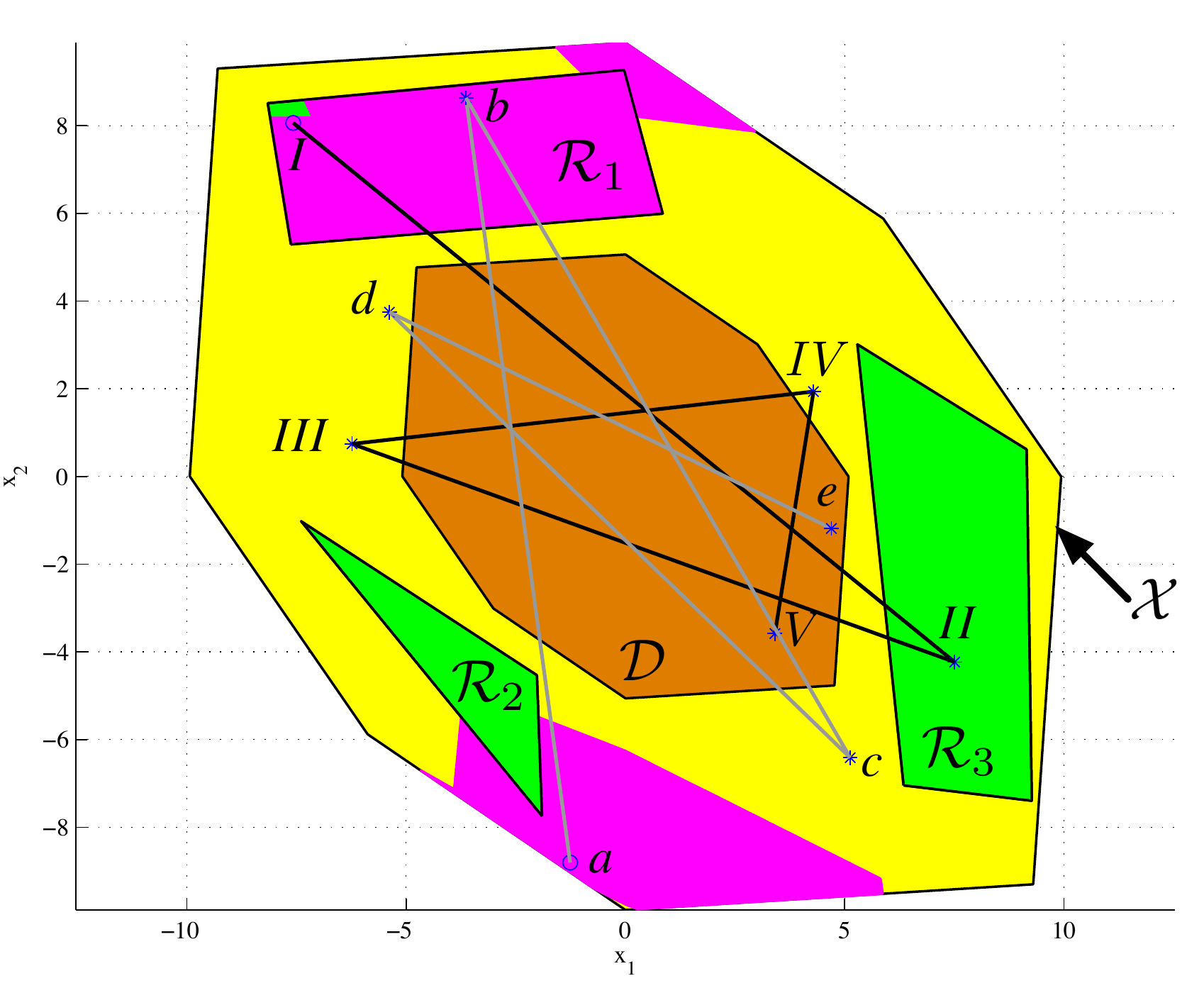}
   \caption{$\QsatA$ is shown in purple. $\mx$, $\mt$, $\{\mr_i\}_{i\in R}$ and two sample trajectories are indicated by labeling.}
   \label{fig:ltlresultverification}
\end{figure}
\end{example}

\begin{remark}[Implementation]
The methods described in this paper were implemented in MATLAB as a software package\footnote{A preliminary version of the software is downloadable from $\mathtt{hyness.bu.edu/switchedpolybis.html}$.}, which uses the MPT toolbox\cite{mpt} for polyhedral operations. 
Alg.~\ref{alg:main} was completed in 2 hours for the example presented in the paper on an iMac with a Intel Core i5 processor at 2.8GHz with 8GB of memory.
 Once the bisimulation quotient is constructed, controller synthesis and verification were both completed in 2 minutes.
 \end{remark}
\section{Conclusions}
\label{sec:concl}
In this paper, we presented a method to abstract the behavior of a switched linear system within a positively invariant subset of $\Rset^n$ to a finite transition system via the construction of a bisimulation quotient.  We employed polyhedral Lyapunov functions to guide the partitioning of the state space and showed that the construction requires polytopic operations only.
We showed how this method can be used to synthesize switching sequences and to verify the behavior of the system under arbitrary switching from specifications given as scLTL formulas over linear predicates in the state of the system.


\end{document}